\numberwithin{equation}{section}
\newtheorem{Theorem}{Theorem}[section]
\newtheorem{Corollary}[Theorem]{Corollary}
\newtheorem{Lemma}[Theorem]{Lemma}
\newtheorem{Proposition}[Theorem]{Proposition}
{ \theoremstyle{definition}
\newtheorem{Definition}[Theorem]{Definition}
\newtheorem{Remark}[Theorem]{Remark} }
\newcommand{\pair}[2]{\langle #1, #2 \rangle}
\newcommand{\ignore}[1]{}
\newcommand{\ol}[1]{\overline{#1}}
\newcommand{\tn}[1]{\textnormal{#1}}
\renewcommand{\i}{{\mathrm{i}}}
\def\Ad{\ensuremath{\textnormal{Ad}}}
\def\ad{\ensuremath{\textnormal{ad}}}
\def\g{\ensuremath{\mathfrak{g}}}
\def\z{\ensuremath{\mathfrak{z}}}
\def\t{\ensuremath{\mathfrak{t}}}
\def\n{\ensuremath{\mathfrak{n}}}
\def\a{\ensuremath{\mathfrak{a}}}
\def\h{\ensuremath{\mathfrak{h}}}
\def\p{\ensuremath{\mathfrak{p}}}
\def\L{\ensuremath{\mathcal{L}}}
\def\R{\ensuremath{\mathcal{R}}}
\def\bC{\ensuremath{\mathbb{C}}}
\def\bR{\ensuremath{\mathbb{R}}}
\def\bZ{\ensuremath{\mathbb{Z}}}
\def\bQ{\ensuremath{\mathbb{Q}}}
\def\ker{\ensuremath{\textnormal{ker}}}
\def\supp{\ensuremath{\textnormal{supp}}}
\def\pr{\ensuremath{\textnormal{pr}}}
\def\dim{\ensuremath{\textnormal{dim}}}
\def\pt{\ensuremath{\textnormal{pt}}}
\def\index{\ensuremath{\textnormal{index}}}
\def\Ch{\ensuremath{\textnormal{Ch}}}
\def\Td{\ensuremath{\textnormal{Td}}}
\def\calDC{\ensuremath{\mathcal{D}_{\bC}}}
\def\supp{\ensuremath{\textnormal{supp}}}
\def\ann{\ensuremath{\tn{ann}}}
\def\de{\ensuremath{\tn{det}_\bC}}
\begin{document}

\newcommand{\arXivNumber}{1907.06113}

\renewcommand{\PaperNumber}{090}

\FirstPageHeading

\ShortArticleName{Quasi-Polynomials and the Singular $[Q,R]=0$ Theorem}

\ArticleName{Quasi-Polynomials and the Singular $\boldsymbol{[Q,R]=0}$\\ Theorem}

\Author{Yiannis LOIZIDES}

\AuthorNameForHeading{Y.~Loizides}

\Address{Pennsylvania State University, USA}
\Email{\href{mailto:yxl649@psu.edu}{yxl649@psu.edu}}

\ArticleDates{Received July 16, 2019, in final form November 13, 2019; Published online November 18, 2019}

\Abstract{In this short note we revisit the `shift-desingularization' version of the $[Q,R]=0$ theorem for possibly singular symplectic quotients. We take as starting point an elegant proof due to Szenes--Vergne of the quasi-polynomial behavior of the multiplicity as a function of the tensor power of the prequantum line bundle. We use the Berline--Vergne index formula and the stationary phase expansion to compute the quasi-polynomial, adapting an early approach of Meinrenken.}

\Keywords{symplectic geometry; Hamiltonian $G$-spaces; symplectic reduction; geometric quantization; quasi-polynomials; stationary phase}

\Classification{53D20; 53D50}

\vspace{-2mm}

\section{Introduction}
Let $(M,\omega)$ be a compact connected symplectic manifold equipped with an action of a compact connected Lie group $G$ by symplectomorphisms. Suppose that the action of $G$ is Hamiltonian, meaning that there is a $G$-equivariant map, the moment map,
\[ \mu_\g \colon \ M \rightarrow \g^\ast,\]
where $\g^\ast$ is the dual of the Lie algebra $\g=\tn{Lie}(G)$, satisfying the moment map condition
\begin{gather}\label{eqn:mm}
\iota(X_M)\omega=-{\rm d}\pair{\mu_\g}{X}, \qquad X \in \g.
\end{gather}
Let $\big(L,\nabla^L\big)$ be a $G$-equivariant prequantum line bundle with connection on $M$, i.e., $L$~is a~$G$-equivariant Hermitian line bundle with compatible connection $\nabla^L$, $\big(\nabla^L\big)^2=-2\pi \i \omega$ and the derivative of the $G$-action on $L$ satisfies Kostant's condition
\[ \L^L_X-\nabla^L_{X_M}=2\pi \i\pair{\mu_\g}{X}.\]
Choose a compatible almost complex structure $J$ on $M$, i.e., $\omega(Jw,Jv)=\omega(w,v)$ and $\omega(w,Jv)\allowbreak =:g(w,v)$ is a Riemannian metric. Let $D_L$ denote the Dolbeault--Dirac operator twisted by $\big(L,\nabla^L\big)$, an elliptic differential operator acting on sections of the spinor bundle $\wedge T^\ast_{0,1}M\otimes L$. The kernel of~$D_L$ carries an action of~$G$, and the $G$-equivariant index is defined to be the difference $\index_G(D_L):=\ker\big(D_L^{\tn{even}}\big)-\ker\big(D_L^{\tn{odd}}\big)$ of the kernel of~$D_L$ on even/odd degree forms, regarded as an element of the representation ring~$R(G)$.

The quantization-commutes-with-reduction theorem ($[Q,R]=0$ theorem) describes the multiplicity of the trivial representation in $\index_G(D_L)$ in terms of the symplectic quotient $M^{\tn{red}}:=\mu_\g^{-1}(0)/G$. When $0$ is a regular value of $\mu_\g$, $M^{\tn{red}}$ is an orbifold and the theorem states that $\index_G(D)^G$ equals the index of the twisted Dolbeault--Dirac operator $D^{\tn{red}}_{L^{\tn{red}}}$ on $M^{\tn{red}}$. The theorem was first conjectured by Guillemin--Sternberg~\cite{GuilleminSternbergConjecture}, and the general case ($M$, $G$ both compact, $0$ a regular value) was first proved by Meinrenken~\cite{MeinrenkenSymplecticSurgery}. Different proofs of the $[Q,R]=0$ theorem were given by Tian--Zhang~\cite{TianZhang} and Paradan~\cite{ParadanRiemannRoch}. The theorem has since been extended in various directions.

There are versions of the $[Q,R]=0$ theorem when $0$ is not necessarily a regular value, due to Meinrenken--Sjamaar~\cite{MeinrenkenSjamaar}; below we will give a precise statement of one of these results, involving a partial \emph{shift desingularization}, i.e., $\index_G(D_L)^G$ is related to the index on the symplectic quotient at a nearby weakly regular value. At the same time, we introduce some notation that will be of use later on.

Fix a maximal torus $T$ with Lie algebra $\t$. Let $\Lambda \subset \t^\ast$ be the (real) weight lattice. Given $\lambda \in \Lambda$, the corresponding character $T \rightarrow U(1)$ is written $t \mapsto t^\lambda={\rm e}^{2\pi \i\pair{\lambda}{X}}$ where $t={\rm e}^X$, $X \in \t$. Let $\R \subset \Lambda$ be the set of roots. We also fix a closed positive Weyl chamber $\t_+$, which determines a set of positive (resp.\ negative) roots $\R_{\pm}$. For each relatively open face $\sigma \subset \t_+^\ast$, the stabilizer~$G_\xi$ of points~$\xi \in \sigma$ under the coadjoint action, does not depend on~$\xi$, and will be denoted $G_\sigma$. If $\sigma_1 \subset \ol{\sigma}_2$ then $G_{\sigma_1}\supset G_{\sigma_2}$. Note also that $G_\sigma$ is connected and contains the maximal torus~$T$. The Lie algebra~$\g_\sigma$ decomposes into its semi-simple and central parts
$ \g_\sigma=[\g_\sigma,\g_\sigma]\oplus \z_\sigma$.
The subspace $\z_\sigma^\ast \subset \t^\ast$ is defined to be the annihilator of $[\g_\sigma,\g_\sigma]$, or equivalently the fixed point set of the coadjoint $G_\sigma$ action. The face $\sigma$ is an open subset of $\z_\sigma^\ast$.

Let $\Delta=\mu_\g(M)\cap \t_+^\ast$ be the moment polytope. A well-known theorem in symplectic geometry states that there is a~unique face $\sigma \subset \t^\ast_+$ of minimal dimension such that $\Delta \subset \ol{\sigma}$ (briefly, this is a~consequence of~\eqref{eqn:mm}, which implies that ${\rm d}\mu_\g$ has constant rank on the top dimensional $G$-orbit type stratum, and the complement of the latter has codimension at least~$2$); $\sigma$ is called the \emph{principal face} or \emph{principal wall}. The corresponding symplectic cross-section, called the \emph{principal cross-section},
$Y=\mu_\g^{-1}(\sigma)$
is a Hamiltonian $G_\sigma$-space. Moreover the semi-simple part $[G_\sigma,G_\sigma]$ of $G_\sigma$ acts trivially on $Y$. For further details, see for example~\cite{ConvexitySympCuts} and references therein.

Let $I \subset \z_\sigma^\ast$ be the smallest affine subspace containing $\Delta$. Let $\t_I\subset \t$ be the annihilator of the subspace parallel to $I$, and let $T_I=\exp(\t_I)\subset T$ be the corresponding subtorus. By equation~\eqref{eqn:mm}, $\t_I$ is the generic infinitesimal stabilizer of $Y$. In particular $T_I$ acts trivially, hence the quotient torus $T/T_I$ acts on $Y$. The moment map $\mu_\g$ may have no non-trivial regular values. But the restriction
\[\mu_\g|_Y\colon \ Y \rightarrow I\]
viewed as a map with codomain $I$, always has non-trivial regular values, and we will refer to these as \emph{weakly-regular values}. If $\xi$ is a weakly-regular value, then the reduced space $M_\xi=\mu_\g^{-1}(\xi)/G_\sigma$ is an orbifold. Let $L_\xi=L|_{\mu_\g^{-1}(\xi)}/G_\sigma$ be the corresponding (orbifold) line bundle over $M_\xi$.

\begin{Theorem}[\cite{MeinrenkenSjamaar}, see also \cite{ParadanRiemannRoch,WittenNonAbelian}]
\label{thm:MeinSja}
Let $(M,\omega,\mu_\g)$ be a compact connected Hamiltonian $G$-space with moment polytope $\Delta$. If $0 \notin \Delta$ then $\index_G(D_L)^G=0$. Otherwise for every weakly-regular value $\xi \in \Delta$ sufficiently close to $0$, $\index_G(D_L)^G$ equals the index of the Dolbeault--Dirac operator $D^{\tn{red}}_{L_\xi}$ on the reduced space $M_\xi$.
\end{Theorem}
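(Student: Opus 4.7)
The plan is to study the function
\[ k \mapsto N(k) := \dim \index_G\big(D_{L^k}\big)^G \]
and match it, as a quasi-polynomial in $k$, with the function $k \mapsto \dim \index\big(D^{\tn{red}}_{L_\xi^k}\big)$ obtained via Kawasaki--Riemann--Roch on the reduced orbifold~$M_\xi$. The theorem then follows by setting $k=1$. The first function is quasi-polynomial on a suitable lattice by the Szenes--Vergne argument; the second is quasi-polynomial by Kawasaki's orbifold index theorem. The bulk of the work is to identify the two quasi-polynomials.

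If $0 \notin \Delta$, then the Szenes--Vergne quasi-polynomial lives on a region contained in the moment polytope, so $N(k)=0$ for all $k$; alternatively this vanishing can be read off directly from the Berline--Vergne formula via a separating-hyperplane argument. Assume $0 \in \Delta$. Following Meinrenken's early approach, I express $N(k)$ through the Berline--Vergne equivariant index formula for the character $g\mapsto \tn{tr}\big(g,\index_G(D_{L^k})\big)$, combined with the Weyl integration formula over $T$ to project onto the invariant part. The result is an oscillatory integral over fixed-point data whose phase scales linearly in $k$ and is essentially $2\pi k\pair{\mu_\g}{\cdot\,}$. Tensoring $L^k$ with the character bundle $\bC_{-k\xi}$ shifts the phase so that its critical set becomes $\mu_\g^{-1}(\xi)$; this is the ``shift desingularization'' that makes the large-$k$ stationary phase expansion well-behaved.

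Applying stationary phase as $k\to\infty$ then localizes on $\mu_\g^{-1}(\xi)$ and produces, after collecting contributions from the fixed-point strata, an asymptotic series in $k$ whose coefficients are integrals of characteristic forms over the inertia orbifold of~$M_\xi$. The target is to match this, term by term, with the Kawasaki--Riemann--Roch expression for $\index\big(D^{\tn{red}}_{L_\xi^k}\big)$. Since both sides are quasi-polynomial, agreement of the full asymptotic series forces equality at every $k\ge 1$, in particular at $k=1$.

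The main obstacle I anticipate is this last identification: carrying out the stationary phase expansion cleanly across the (possibly many) strata of the inertia orbifold of~$M_\xi$ and checking that the equivariant A-roof, Euler, and Todd-class factors produced by the normal-form analysis of $\mu_\g$ near $\mu_\g^{-1}(\xi)$ match precisely the factors demanded by Kawasaki's formula. A related subtlety is verifying independence of the specific weakly regular $\xi$ within a chamber near $0$; this should follow because small deformations of $\xi$ within a chamber leave the stationary phase data diffeomorphic, but it is worth confirming as a consistency check with the wall-crossing mechanism implicit in the statement.
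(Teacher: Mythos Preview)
Your outline is close to the paper's strategy, but there is a genuine gap at the ``shift desingularization'' step. Tensoring $L^k$ with $\bC_{-k\xi}$ does not merely move the critical set of the phase while leaving $N(k)=m(k,0)$ untouched; it changes the Fourier mode you are extracting, so what stationary phase actually computes after the shift is $m(k,k\xi)$, not $m(k,0)$. Consequently, matching the stationary-phase output with $\index\big(D^{\tn{red}}_{L_\xi^k}\big)$ yields $m(k,k\xi)=\index\big(D^{\tn{red}}_{L_\xi^k}\big)$ (for $k\in n_\xi\bZ$), which at $k=1$ gives $m(1,\xi)$, not the desired $m(1,0)$. Knowing only that $k\mapsto m(k,0)$ is quasi-polynomial (the one-variable statement from Szenes--Vergne) is not enough to bridge this, since the point $(1,0)$ does not lie on the ray $\{(k,k\xi):k>0\}$ for $\xi\ne 0$.

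The missing ingredient is a \emph{two-variable} quasi-polynomiality statement: one needs to know that $(k,\lambda)\mapsto m(k,\lambda)$ is quasi-polynomial on the lattice points of an entire cone $C_\p\subset \bR\times\t^\ast$ containing both the ray through $(1,0)$ and rays through $(1,\xi)$ for weakly-regular $\xi$ near $0$. This is what the paper establishes (as a strengthening of Szenes--Vergne's Theorem~49) before running stationary phase. With that in hand, the stationary-phase computation of $m(k,k\xi)$ for enough rational weakly-regular $\xi$ determines the single quasi-polynomial on all of $C_\p$, and one may then legitimately evaluate at $(k,\lambda)=(1,0)$ and compare with Kawasaki's formula. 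Your proposal should make this two-variable step explicit; without it the argument does not close.
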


We will now describe the main result of this article and its relation to Theorem \ref{thm:MeinSja}. Consider tensor powers $L^k$, $k \in \bZ_{>0}$ of the prequantum line bundle. For a dominant weight $\lambda$, let $\chi_\lambda \in R(G)$ denote the character of the irreducible representation of $G$ with highest weight $\lambda$. We define the \emph{multiplicity function} $m_G(k,\lambda)$ by the expression
\begin{gather}
\label{eqn:multfcn}
\index_G(D_{L^k})=\sum_{\lambda \in \Lambda \cap \t_+^\ast} m_G(k,\lambda)\chi_\lambda.
\end{gather}
An important theme in the work of Szenes--Vergne \cite{SzenesVergne2010} and also in our approach, is that the function $m_G(k,\lambda)$ has more coherent behavior than its restriction to any fixed value of $k$.

The statement of the result requires some further background on orbifolds, for which we refer the reader to, for example, \cite[Appendix A]{daSilvaThesis}, \cite[Section 2]{MeinrenkenSymplecticSurgery}. A small warning is that we will not require the action of isotropy groups in orbifold charts to be effective (this is in agreement with the references \cite{daSilvaThesis,MeinrenkenSymplecticSurgery} mentioned above). One advantage of permitting this, is that for a locally free action of a compact Lie group $K$ on a manifold $P$, the corresponding orbifold $P/K$ has orbifold charts given automatically by the slice theorem, with the isotropy groups being simply the isotropy groups for the action of $K$ on $P$.

In fact all the orbifolds that we will encounter arise naturally as such quotients $P/K$, and one could avoid mentioning orbifolds altogether by working instead with suitable $K$-basic structures on $P$. An example is the description of characteristic forms for orbifold vector bundles, which can be defined in terms of orbifold charts for~$P/K$, or alternatively in terms of $K$-basic differential forms on $P$. In brief, the latter approach goes as follows. One can take the complex $(\Omega_{\tn{bas}}(P),{\rm d})$ of $K$-basic differential forms on $P$ as a working definition of the de Rham complex of~$P/K$ (if~$K$ acts freely then $P/K$ is a manifold and pullback of forms from $P/K$ to $P$ is an isomorphism of complexes $(\Omega(P/K),{\rm d})\simeq (\Omega_{\tn{bas}}(P),{\rm d})$). A~$K$-equivariant vector bundle $E \rightarrow P$ determines an orbifold vector bundle $E/K$ over $P/K$. Let $\theta$ be a connection on $P$ with curvature $F_\theta$. The choice of connection determines a \emph{Cartan map} (cf.~\cite{MeinrenkenEncyclopedia}) from closed $K$-equivariant forms $\alpha(X)$ on $P$ to closed $K$-basic forms: $\alpha(X) \mapsto \tn{Car}_\theta(\alpha):=\Pi_{\tn{hor}}\alpha(F_\theta)$, where $\Pi_{\tn{hor}}$ is the projection onto the horizontal part relative to the connection. The Cartan map induces an isomorphism from the $K$-equivariant cohomology of $P$ to the cohomology of the complex of basic differential forms on~$P$. If $\alpha(X)$ is a~$K$-equivariant characteristic form (constructed via the $K$-equivariant analogue of the usual Chern--Weil construction cf.~\cite{BerlineGetzlerVergne,MeinrenkenEncyclopedia}), then one may take $\tn{Car}_\theta(\alpha) \in \Omega_{\tn{bas}}(P)$ as the definition of the corresponding characteristic form for $E/K$.

Let $\xi \in \Delta$ be a weakly-regular value. By the moment map equation \eqref{eqn:mm}, the action of $K=T/T_I$ on the level set
\[ P=\mu_\g^{-1}(\xi) \]
is locally free. The set $S_P$ of elements $g \in T/T_I$ such that $P^g\ne \varnothing$ is finite. For each $g \in S_P$, we obtain an orbifold
\[ \Sigma_g=P^g/(T/T_I), \qquad \Sigma=\bigsqcup_{g \in S_P} \Sigma_g.\]
Note that $\Sigma_1=P/(T/T_I)=M_\xi$ identifies with the reduced space itself, and more generally $\Sigma_g$ identifies with a symplectic quotient of $Y^g$. For each $g \in S_P$ there is an immersion $\Sigma_g\hookrightarrow \Sigma$ induced by $P^g\hookrightarrow P$. Let $\nu_{\Sigma_g,\Sigma}$ denote the (orbifold) normal bundle (the quotient $\nu_{P^g,P}/(T/T_I)$), which inherits a complex structure from the almost complex structures on $Y$, $Y^g$. Define the characteristic form
\[ \calDC^g(\nu_{\Sigma_g,\Sigma})=\tn{det}_\bC\big(1-g_\nu^{-1} {\rm e}^{-\frac{\i}{2\pi}F_\nu}\big), \]
where $g_\nu$ denotes the action of $g$ on the normal bundle (defined in terms of an orbifold chart, or in terms of $\nu_{P^g,P}$), and $F_\nu$ denotes the curvature. Taking the quotient of $L|_{P^g}$ we obtain (orbifold) line bundles
\[ L_{\Sigma_g}=(L|_{P^g})/(T/T_I), \qquad L_\Sigma=\bigsqcup_{g \in S_P} L_{\Sigma_g}. \]
There is a locally constant function
\[ g_L \colon \ \Sigma_g \rightarrow U(1) \]
giving the phase of the action of $g$ on $L_{\Sigma_g}$ (or equivalently on $L|_{P^g}$). Let $d \colon \Sigma \rightarrow \bZ$ be the locally constant function giving the size of a generic isotropy group for $\Sigma$ (or equivalently the number of elements in the generic stabilizer for the $T/T_I$ action on $\sqcup P^g$).

Let $\theta$ be a connection for the locally free $K=T/T_I$-action on $\sqcup_{g \in S_P}P^g$. The curvature $F_\theta$ is horizontal and $\t/\t_I$-valued, hence for any $\lambda \in (\t/\t_I)^\ast=I$, the form $\pair{\lambda}{F_\theta}$ is $K$-basic, hence descends to $\Sigma$. With the preparations above, we can state the main result of this note.
\begin{Theorem}\label{thm:main}If $0 \notin \Delta$ then $m(k,0)=0$ for all $k \ge 1$. If $0 \in \Delta$ then there is a closed polytope $\p \subset \Delta$ of the same dimension as $\Delta$ and containing the origin such that the following is true. Let $C_\p$ denote the cone
\[ C_\p=\{(t,t\tau)\,|\,t \in (0,\infty), \tau \in \p \} \subset \bR \times \t^\ast.\]
Fix a weakly regular value $\xi \in \Delta$ sufficiently close to $0$ as in Theorem~{\rm \ref{thm:MeinSja}}. Let $P=\mu_\g^{-1}(\xi)$ and define $\Sigma$, $L_\Sigma$, etc.\ as above. Then for all $(k,\lambda) \in (\bZ_{>0}\times \Lambda)\cap C_\p$,
\begin{gather}\label{eqn:main}
m_G(k,\lambda)=\sum_{g \in S_P} g^{-\lambda} \int_{\Sigma_g} \frac{1}{d}\frac{g_L^k\Ch(L_\Sigma)^k\Td(\Sigma)}{\calDC^g(\nu_{\Sigma_g,\Sigma})}{\rm e}^{\pair{\lambda}{F_\theta}}.
\end{gather}
\end{Theorem}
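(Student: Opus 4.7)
\emph{Phase 1 (quasi-polynomial reduction).} The plan, mirroring the three tools advertised in the abstract, has three phases. By Szenes--Vergne, for a suitable polytope $\p \subset \Delta$ containing the origin and of the same dimension as $\Delta$, the function $(k,\lambda) \mapsto m_G(k,\lambda)$ agrees on $(\bZ_{>0} \times \Lambda) \cap C_\p$ with a single quasi-polynomial function of $(k,\lambda)$. The right-hand side of \eqref{eqn:main} is also manifestly quasi-polynomial on $C_\p$: the phase $g^{-\lambda}g_L^k$ is a periodic function of $(k,\lambda)$ since $g$ has finite order on $\Sigma_g$, and the remaining integrand is polynomial in $k$ and $\lambda$ after integration over the finite-dimensional orbifold $\Sigma$. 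Consequently it suffices to match the two sides on a cofinal family in $C_\p$; equivalently, to match their leading asymptotics as $(k,\lambda)$ scales along a ray into the interior of $C_\p$.

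\emph{Phase 2 (Berline--Vergne + Weyl extraction).} I next apply the Berline--Vergne fixed-point formula for the $T$-action,
\[
\index_G(D_{L^k})\big({\rm e}^X\big) = \sum_{F\subset M^T}\int_F \frac{\Ch^X\big(L^k|_F\big)\,\Td^X(F)}{\calDC^X(\nu_{F,M})}, \qquad X \in \t,
\]
and recover $m_G(k,\lambda)$ as a Fourier coefficient on $T$ of this character multiplied by the Weyl denominator (via the Weyl character and integration formulas). Substituting Berline--Vergne, $m_G(k,\lambda)$ takes the form of an alternating sum over $w \in W$ of oscillatory integrals on a fundamental domain in $\t$, with oscillatory factor ${\rm e}^{2\pi\i\pair{w(\lambda+\rho)-\rho-k\mu_\g(F)}{X}}$.

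\emph{Phase 3 (stationary phase).} I fix a weakly regular value $\xi$ as in Theorem~\ref{thm:MeinSja} and restrict to $(k,\lambda) \in C_\p$ with $\lambda/k$ close to $\xi$. Writing $\lambda = k\xi + \ti{\lambda}$ and rescaling $X = Y/k$, the Phase~2 integrals acquire the form $\int {\rm e}^{2\pi\i k \pair{\mu_\g - \xi}{X}}\times \cdots\,{\rm d}X$. Stationary phase as $k \to \infty$ localizes the sum to $P = \mu_\g^{-1}(\xi)$: contributions from fixed components $F$ with $\mu_\g(F) \ne \xi$ do not feed into the quasi-polynomial (they are either rapidly decaying or annihilated by the Weyl alternation), while the surviving Gaussian integrations in the directions normal to $P$ assemble, via the structure of $P$ as a locally free $(T/T_I)$-space, into a Kawasaki-type sum over $\Sigma = \sqcup_{g \in S_P}\Sigma_g$. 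The factors match as expected: $g_L^k g^{-\lambda}$ comes from the action of $g$ on $L|_{P^g}$ together with the character evaluation at $\lambda$; the curvature factor ${\rm e}^{\pair{\lambda}{F_\theta}}$ encodes the residual $\ti{\lambda}$-dependence via the Cartan map for the connection $\theta$ on $P \to \Sigma$; the orbifold factor $1/d$ accounts for the generic stabilizer; and $\calDC^g(\nu_{\Sigma_g,\Sigma})$ is the descent of the equivariant Euler form of $\nu_{P^g,P}$. The result matches \eqref{eqn:main} at the level of leading asymptotics, and Phase~1 promotes this to equality throughout $C_\p$.

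\emph{Main obstacle.} The heart of the argument is Phase~3: verifying that after Weyl alternation the non-stationary contributions from components $F$ with $\mu_\g(F) \ne \xi$ cancel exactly (so that the answer is genuinely local around $P$), and that the surviving Gaussian integrals reassemble precisely into the Cartan-map description of orbifold characteristic forms on $\Sigma$. This is where ``adapting an early approach of Meinrenken'' enters, and the Szenes--Vergne framework is what converts an otherwise delicate asymptotic stationary-phase computation into a rigid quasi-polynomial identity valid on all of $C_\p$.
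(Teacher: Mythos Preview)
Your three-phase strategy matches the paper's outline, and Phase~1 is right in spirit (the paper makes it precise via Corollary~\ref{cor:asymptoticdetermination}). But there is a genuine gap between Phases~2 and~3. In Phase~2 you write the Berline--Vergne formula as a sum over components $F \subset M^T$, on each of which the moment map is \emph{constant}, equal to some $\mu_F$; yet in Phase~3 you claim stationary phase in $X$ localizes the resulting expression to $P = \mu_\g^{-1}(\xi)$. Since $\xi$ is a weakly regular value, $T/T_I$ acts locally freely on $P$, so generically $P \cap M^T = \varnothing$: the level set $P$ simply does not appear in your sum, and there is no mechanism by which oscillatory integrals with \emph{constant} phases $\pair{\mu_F - \xi}{X}$ (all with $\mu_F \ne \xi$) can ``find'' $\mu^{-1}(\xi)$. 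Your Phase~3 integrand ${\rm e}^{2\pi\i k\pair{\mu_\g - \xi}{X}}$ already presupposes an integral over a space on which $\mu_\g$ varies, which your Phase~2 formula does not provide.

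The paper repairs this with two devices you omit. First, it covers $T$ by a finite partition of unity $\{\sigma_t\}_{t \in S}$ and uses the Berline--Vergne formula near each base point $t$ over the larger fixed set $M^t$ (not $M^T$); for instance at $t=e$ one has $M^e=M$. Second, and crucially, it replaces the Weyl denominator $\de^{\g/\t}\big(1-t^{-1}{\rm e}^{-X}\big)$ by a compactly supported equivariant Thom form on $\g^t/\t$ (the Bott-element representative $\Ch^t({\sf b},\cdot)$), converting the fixed-point sum into an honest integral over the open set $\mu_{\g/\t}^{-1}(B_r^t) \subset M^t$ on which $\mu$ genuinely varies. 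Only then does stationary phase on $M^t \times \t$ localize to $\mu^{-1}(\xi)$, after which the cross-section theorem and a coisotropic normal form near $P$ allow the expansion to be carried out explicitly. Finally, the sum over $g \in S_P$ is not produced by Gaussian integrals normal to $P$ as you suggest: it is what remains of the partition-of-unity sum over $S \subset T$ after one shows (the Lemma yielding~\eqref{eqn:Asympt2}) that the $T_I$-factor integrates to~$1$.
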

Of course this result is also originally due to Meinrenken--Sjamaar~\cite{MeinrenkenSjamaar}. Theorem~\ref{thm:MeinSja} follows immediately from Theorem~\ref{thm:main} by applying Kawasaki's index theorem for orbifolds to $\index\big(D^{\tn{red}}_{L_\xi}\big)$ and comparing with the evaluation of~\eqref{eqn:main} at $(k,\lambda)=(1,0)$.

Let us give a brief summary of our approach to deriving Theorem~\ref{thm:main}. Recall that a func\-tion~$f$ on a lattice $\Gamma$ in a real vector space~$V$ is said to be \emph{quasi-polynomial} if there is a~sublattice~$\Gamma^\prime$ with~$\Gamma/\Gamma^\prime$ finite and $f$ restricts to a polynomial function on each coset of~$\Gamma^\prime$. More generally, one says~$f$ is quasi-polynomial on a subset $\Gamma_0 \subset \Gamma$ if $f\upharpoonright \Gamma_0=q\upharpoonright \Gamma_0$ for some quasi-polynomial~$q$. A~fundamental fact, originally derived from Theorem~\ref{thm:MeinSja} by Meinrenken--Sjamaar~\cite{MeinrenkenSjamaar}, is that~$m_G$ is quasi-polynomial on the subset $C_\p \cap (\bZ_{>0}\times \Lambda)$. Our first goal, in Section~\ref{sec:mult}, is to give an independent proof of this fact, taking as a~starting point a~formula for~$m_G$ due to Szenes--Vergne~\cite{SzenesVergne2010} (inspired by work of Paradan~\cite{ParadanRiemannRoch}), which they obtained by a~combinatorial rearrangement of the fixed-point formula for the index.

Then in Section \ref{sec:statphase} we adapt an idea of Meinrenken~\cite{MeinrenkenAsymptotic} to compute the quasi-polynomial $m_G \upharpoonright C_\p$ using the Berline--Vergne index formula and the principle of stationary phase. The output of the stationary phase formula is an asymptotic expansion for $m_G(k,k\xi)$ in powers of~$k$ (allowing coefficients that are periodic in~$k$). As one knows in advance that $m_G(k,k\xi)$ is quasi-polynomial in~$k$, one concludes that the expansion is exact, yielding Theorem~\ref{thm:main}.

The article of Meinrenken--Sjamaar~\cite{MeinrenkenSjamaar} contains, besides Theorem~\ref{thm:MeinSja}, a wealth of detailed information about singular reduction and $[Q,R]=0$. Our goal in this short note is much more modest. We also do not make a great claim of originality, and in particular the debt to~\cite{SzenesVergne2010} and~\cite{MeinrenkenAsymptotic} will be apparent. Part of our motivation stems from the hope that the article of Szenes--Vergne~\cite{SzenesVergne2010}, in combination with this note, will provide a more elementary treatment of the $[Q,R]=0$ theorem than was previously available.

\section{Quasi-polynomials and the multiplicity function}\label{sec:mult}
The goal of this section is Theorem~\ref{thm:qp} on the quasi-polynomial behavior of the multiplicity function, which we prove using results of Szenes--Vergne~\cite{SzenesVergne2010} reviewed below.

The quotient $\g/\t$ can be identified with the unique $\Ad(T)$-invariant complement to $\t$ in $\g$. Let $\t \subset \h \subset \g$ be a $T$-invariant subspace. We may similarly identify $\h/\t$ and $\g/\h$ with subspaces of~$\g$. The choice of positive roots $\R_+$ determines a complex structure on $\g/\t$, whose $+\i$-eigenspace is identified with the direct sum of the positive root spaces:
\[ (\g/\t)^{1,0}\simeq \bigoplus_{\alpha \in \R_+} \g_\alpha.\]
We obtain similar complex structures on $\g/\h$, $\h/\t$, whose $+\i$-eigenspaces are direct sums of positive roots spaces. We will write $\de^{\g/\t}(a)$ (resp.~$\de^{\g/\h}(a)$, $\de^{\h/\t}(a)$) for the determinant of a complex linear endomorphism $a$ of $\g/\t$ (resp.~$\g/\h$,~$\h/\t$). An example is the endomorphism~$\Ad_t$, $t \in T$ (resp.~$\ad_X$, $X \in \t$); in this case we will simply write $\de^{\g/\t}(t)$ instead of $\de^{\g/\t}(\Ad_t)$ (resp.~$\de^{\g/\t}(X)$ instead of $\de^{\g/\t}(\ad_X)$), the action of $T$ (resp.~$\t$) on $\g/\t$ being understood. Then for example if $t={\rm e}^X \in T$,
\begin{gather*} \de^{\g/\t}\big(1-t^{-1}\big)=\prod_{\alpha \in \R_+}\big(1-t^{-\alpha}\big)=\prod_{\alpha \in \R_+}\big(1-{\rm e}^{-2\pi \i \pair{\alpha}{X}}\big), \\
 \de^{\g/\t}(-X)=\prod_{\alpha \in \R_+} -2\pi \i \pair{\alpha}{X}.\end{gather*}

For $\lambda \in \Lambda\cap \t_+^\ast$, the Weyl character formula says that for $t \in T$,
\begin{gather}\label{eqn:Weylchar}
\chi_\lambda(t) \cdot \de^{\g/\t}\big(1-t^{-1}\big)=\sum_{w\in W} (-1)^{l(w)}t^{w(\lambda+\rho)-\rho},
\end{gather}
where $W$ is the Weyl group, $l(w)$ is the length of the element $w \in W$, and $\rho$ is the half sum of the positive roots. The right-hand-side is an element of $R(T)$ with multiplicity function $m_\lambda$ obtained by Fourier transform. Note that
\begin{itemize}\itemsep=0pt
\item $m_\lambda$ is anti-symmetric under the $\rho$-\emph{shifted action of the Weyl group}:
\begin{gather*}
m_\lambda(w(\mu+\rho)-\rho)=(-1)^{l(w)}m_\lambda(\mu).
\end{gather*}
\item The support of $m_\lambda|_{\Lambda \cap \t_+^\ast}$ is $\{\lambda\}$, where it takes the value $1$.
\end{itemize}
Conversely these two properties determine $m_\lambda$: it is the unique $W$-anti-symmetric function on~$\Lambda$ extending the multiplicity function of $\chi_\lambda$. Applying these observations to the multiplicity function $m_G$ defined in~\eqref{eqn:multfcn}, we make the following definition.
\begin{Definition}Let $m(k,-)\colon \Lambda \rightarrow \bZ$ be the unique $\rho$-shifted $W$-anti-symmetric function such that $m(k,\lambda)=m_G(k,\lambda)$ for all $\lambda \in \Lambda \cap \t_+^\ast$. The corresponding character $Q(k,-) \colon T \rightarrow \bC$ is defined as the inverse Fourier transform:
\[ Q(k,t)=\sum_{\lambda \in \Lambda} m(k,\lambda)t^\lambda.\]
\end{Definition}

Using the Weyl character formula \eqref{eqn:Weylchar} and the definition of $m_G$, it is easy to verify that
\begin{gather*}
Q(k,t)=\sum_{\lambda \in \Lambda} m(k,\lambda)t^\lambda=\index_T(D_{L^k})(t)\cdot \de^{\g/\t}\big(1-t^{-1}\big).
\end{gather*}

We define
\[ \mu=\pr_{\t^\ast}\circ \mu_\g \]
to be the composition of the moment map $\mu_\g$ with the projection to $\t^\ast$. Then $\mu$ is a moment map for the action of~$T$ on~$M$. Suppose $t\in T$ is sufficiently generic, so that $M^t=M^T$. The Atiyah--Bott--Segal formula for the index yields
\begin{gather}\label{eqn:fixedpt}
Q(k,t)=\sum_{F \subset M^T}t^{k\mu_F}\int_F \frac{{\rm e}^{k\omega}\Td(F)}{\calDC^t(\nu_F)}\de^{\g/\t}\big(1-t^{-1}\big),
\end{gather}
where the sum is over connected components $F$ of $M^T$, and $\mu_F$ denotes the constant value of the moment map $\mu$ on $F$. The multiplicity $m$ is obtained by Fourier transform of~\eqref{eqn:fixedpt}.

Key to the approach in~\cite{SzenesVergne2010} is a different expression for $m(k,\lambda)$ that we briefly describe here. The formula depends on the choice of an invariant inner product on $\g$, as well as a generic point~$\gamma$ contained in $\t^\ast_+$ and sufficiently close to~$0$ (see~\cite[Section~4.1]{SzenesVergne2010} for the meaning of `generic' here). Using the inner product we identify $\t \simeq \t^\ast$. We need some additional notation:
\begin{itemize}\itemsep=0pt
\item Let $\tn{Comp}_T(M)$ denote the set of connected components of $M^H$, as $H$ ranges over all (connected) sub-tori of~$T$.
\item For $C \in \tn{Comp}_T(M)$, let $\t_C \subset \t$ be its generic infinitesimal stabilizer. Let $A_C$ be the smallest affine subspace containing the image $\mu(C)$. In particular $A_M$ is the smallest affine subspace containing $\mu(M)$. Note that $A_C$ is a translate of the annihilator of~$\t_C$.
\item Let $\gamma_C \in A_C$ be the orthogonal projection of $\gamma$ onto $A_C$, and let $\tau_C=\gamma_C-\gamma$.
\end{itemize}

The Szenes--Vergne--Paradan formula \cite[equation~(39)]{SzenesVergne2010} (see also \cite[Proposition~41, Theorem~48]{SzenesVergne2010}) is a sum of contributions:
\begin{gather}\label{eqn:SzVer}
m=\sum_C m_C,
\end{gather}
where $C$ ranges over components $C \in \tn{Comp}_T(M)$ such that $\gamma_C \in \mu_\g(C)$. Szenes--Vergne derive this formula directly from \eqref{eqn:fixedpt} using an interesting combinatorial rearrangement, the main ingredient of which is a decomposition formula for Kostant-type partition functions. The formula is inspired by, and closely related to, the work of Paradan~\cite{ParadanRiemannRoch}. The fact that only a subset of the components in $\tn{Comp}_T(M)$ contribute is non-trivial and quite important for $[Q,R]=0$. The proof given by Szenes--Vergne involves studying the asymptotic behavior of the~$m_C$'s using the Berline--Vergne formula and the principle of stationary phase. It goes back to results of Paradan~\cite{ParadanRiemannRoch}, who proved a closely related result using transversally elliptic symbols and K-theoretic methods. Note that Szenes--Vergne assume for simplicity that $M^T$ consists of isolated fixed points, but it is not difficult to handle the general case with the same methods; see for example \cite[Section~7]{VerlindeSums} for some indications of how this can be done.

For the proof of Theorem \ref{thm:qp} we do not need the precise definition of the terms $m_C$ in \eqref{eqn:SzVer}, but we will need the following two crucial properties:
\begin{enumerate}\itemsep=0pt
\item The function $m_C$ restricts to a quasi-polynomial on each $\Lambda$-translate of the set $(\bZ\times \Lambda)\cap \bm{A}_C$, where
\[ \bm{A}_C=\{(t,t\tau)\,|\,t\in \bR_{>0}, \,\tau \in A_C\} \subset \bR \times \t^\ast.\]
\item Let $\tn{wt}(\nu_C)$ denote the list of complex weights (for the compatible almost complex structure~$J$) for the $\t_C$ action on the normal bundle~$\nu_C$. If $\lambda \in \Lambda$ is in the support of~$m_C(k,-)$ then~$\lambda$ satisfies the inequality
\begin{gather}\label{eqn:d}
\pair{\tau_C}{\lambda}\ge k\pair{\tau_C}{\gamma_C}+\pair{\tau_C}{\sigma_C}, \qquad \sigma_C:=\sum_{\substack{\delta \in \tn{wt}(\nu_C)\\ \pair{\tau_C}{\delta}>0}} \delta-\sum_{\substack{\alpha \in \R_+ \\ \pair{\tau_C}{\alpha}>0}}\alpha.
\end{gather}
See the proof of \cite[Theorem~49]{SzenesVergne2010}. Note that, except for the special case $\tau_C=0$, \eqref{eqn:d}~defines a half-space in $\t^\ast$.
\end{enumerate}
We will refer to these two properties as `property~(a)', `property~(b)' in the proof of the next result. Theorem~\ref{thm:qp} is a strengthening of \cite[Theorem~49]{SzenesVergne2010} (which says that the function $k \mapsto m(k,0)$ is quasi-polynomial), and our arguments are based on their elegant approach.

\begin{Theorem}[\cite{MeinrenkenSjamaar}, see also \cite{ParadanRiemannRoch, ParadanWallCrossing, WittenNonAbelian}]\label{thm:qp}
If $0 \notin \Delta$ then $m(k,0)=0$ for all $k \ge 1$. If $0 \in \Delta$ then there is a closed polytope $\p \subset \Delta$ of the same dimension as $\Delta$ and containing the origin such that $m(k,\lambda)$ is quasi-polynomial on the set of integral points $(\bZ \times \Lambda) \cap C_\p$ contained in the cone
\[ C_\p=\{(t,t\tau)\,|\,t \in (0,\infty), \,\tau \in \p \} \subset \bR \times \t^\ast.\]
\end{Theorem}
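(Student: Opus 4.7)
My approach is to start from the Szenes--Vergne decomposition $m=\sum_C m_C$ of~\eqref{eqn:SzVer} and analyze each summand using properties~(a) and~(b). Fix $\gamma\in\t_+^*$ generic and close to 0, and classify each contributing component $C$ as \emph{central} if $0\in A_C$ (so $A_C$ is a linear subspace and $\pair{\tau_C}{\gamma_C}=0$ identically, since $\tau_C\perp A_C$ and $\gamma_C\in A_C$), and \emph{off-center} otherwise. In the off-center case, letting $q_C$ be the foot of the perpendicular from $0$ onto $A_C$, one has $\gamma_C\to q_C$, $\tau_C\to q_C$, and $\pair{\tau_C}{\gamma_C}\to\|q_C\|^2>0$ as $\gamma\to 0$.

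For off-center $C$, property~(b) forces the support of $m_C(k,-)$ into the half-space whose bounding hyperplane recedes from the ray $\{(k,0):k>0\}$ at the positive rate $\pair{\tau_C}{\gamma_C}$. A short computation shows that for $(k,k\tau)\in C_\p$ the constraint $\pair{\tau_C}{k\tau}\ge k\pair{\tau_C}{\gamma_C}+\pair{\tau_C}{\sigma_C}$ fails uniformly in $k\ge 1$ provided
\[
\pair{\tau_C}{\tau}<\pair{\tau_C}{\gamma_C}+\min\bigl(0,\pair{\tau_C}{\sigma_C}\bigr)\qquad\text{for every }\tau\in\p.
\]
Because there are only finitely many off-center $C$ and each imposes a single linear condition on $\p$, one may shrink $\p$ around 0 so that this holds simultaneously. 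Then $m_C$ vanishes identically on $(\bZ_{>0}\times\Lambda)\cap C_\p$ for every off-center $C$.

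For each central $C$, additionally insist $\p\subset A_C$, so that $C_\p\subset\bm{A}_C$. Property~(a) then gives that $m_C$ restricts to a single quasi-polynomial on $(\bZ\times\Lambda)\cap C_\p$, and summing finitely many quasi-polynomials produces the required quasi-polynomial on the cone. The case $0\notin\Delta$ follows by specialization to $\lambda=0$: the off-center analysis gives $m_C(k,0)=0$ for $k\ge N_C$, hence $m(k,0)=0$ for all $k$ sufficiently large. Combined with the Szenes--Vergne result \cite[Theorem~49]{SzenesVergne2010} that $k\mapsto m(k,0)$ is quasi-polynomial in $k\in\bZ$, and the rigidity fact that a quasi-polynomial vanishing on a cofinite subset of $\bZ_{>0}$ vanishes identically, one concludes $m(k,0)=0$ for all $k\ge 1$.

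The main obstacle is making the two conditions on $\p$ compatible with $\dim\p=\dim\Delta$: small enough around 0 for each off-center $C$ (easily arranged), and contained in $A_C$ for each central $C$. The latter demands that $\bigcap_{C\text{ central}}A_C$ contain the affine hull of $\Delta$; this has to be checked by relating each central $A_C\supseteq\mu(C)$ to $\Delta$. Should a central $C$ have $A_C$ strictly smaller than the affine span of $\Delta$, one treats it by combining property~(a) on several $\Lambda$-translates of $\bm{A}_C$ meeting $C_\p$ with the half-space cut of property~(b), so that exactly one translate contributes nonzero values and its quasi-polynomial extends to all of $C_\p$.
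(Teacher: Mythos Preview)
Your proposal follows the paper's architecture (Szenes--Vergne decomposition, sort the $m_C$ by whether $0\in A_C$), and you correctly identify the obstacle in your last paragraph, but you do not resolve it. The suggested fallback for a central $C$ with $A_C\subsetneq I$ --- that one $\Lambda$-translate of $\bm{A}_C$ contributes and its quasi-polynomial ``extends to all of $C_\p$'' --- does not work: a nonzero function supported on a single proper affine sublattice is never the restriction of a quasi-polynomial on the ambient lattice, and nothing in properties~(a),~(b) alone forces only one translate to carry nonzero values. Relatedly, for off-center $C$ the open half-space you want to contain $0$ requires $\pair{\tau_C}{\gamma_C}+\min(0,\pair{\tau_C}{\sigma_C})>0$, which you never verify; and in the $0\notin\Delta$ argument you silently assume no central $C$ contributes.

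What is missing is sign control on $\pair{\tau_C}{\sigma_C}$, and this is exactly where the symplectic geometry enters. The paper first shows $\pair{\tau_C}{\sigma_C}\ge 0$ for every contributing $C$: by the cross-section theorem each positive root $\alpha$ with $\pair{\tau_C}{\alpha}>0$ already appears in $\tn{wt}(\nu_C)$, so those terms cancel in $\sigma_C$ and what remains is a sum of weights pairing nonnegatively with~$\tau_C$ (equation~\eqref{eqn:d2}). It then proves the \emph{strict} inequality $\pair{\tau_C}{\sigma_C}>0$ for every $C\ne C_I$, where $C_I$ is the unique component with $A_{C_I}=I$ and $\gamma_{C_I}=\gamma_I$: for $\gamma_C\notin\t_+^\ast$ a negative root contributes to~\eqref{eqn:d2}; for $\gamma_C\in\Delta\setminus\Delta_{\tn{reg}}$ one studies the moment-map component $\pair{\tau_C}{\mu}$ on a symplectic cross-section via its local normal form~\eqref{eqn:fcnf} and fibre-connectedness to exhibit a weight $\delta_j$ with $\pair{\tau_C}{\delta_j}>0$. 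With strict positivity in hand, every $m_C$ with $C\ne C_I$ vanishes on $C_\p$, leaving a single surviving term and no patching of translates. For $0\notin\Delta$ the paper disposes of central $C$ directly: choosing $\gamma$ close enough to $0$ forces $\gamma_C\notin\mu_\g(M)$, so such $C$ do not appear in~\eqref{eqn:SzVer} at all.
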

\begin{proof}The strategy is based on choosing a suitable $\gamma \in \t^\ast_+$ and then analyzing the supports of the contributions $m_C$ to $m$ in the corresponding Szenes--Vergne--Paradan formula~\eqref{eqn:SzVer} using property~(b). The contribution $m_C$ appears in~\eqref{eqn:SzVer} only if $\gamma_C \in \mu_\g(M)\cap \t^\ast=W \cdot \Delta \subset W \cdot I$ (recall by definition~$I$ is the smallest affine subspace containing~$\Delta$). Because $\gamma$ is chosen generically, the only $C \in \tn{Comp}_T(M)$ which may contribute to~\eqref{eqn:SzVer} are those such that the affine subspace~$A_C$ is entirely contained in~$I$ or one of its Weyl reflections, and throughout the proof we assume this is the case.

Suppose $0 \in \Delta$. We argue that by a suitable choice of $\gamma$, one can arrange that \emph{for all but one} of the contributions, (i) $\pair{\tau_C}{\gamma_C}\ge 0$ with equality if and only if $0 \in A_C$, (ii) $\pair{\tau_C}{\gamma_C}>\pair{\tau_C}{\gamma_I}$, where $\gamma_I$ is the orthogonal projection of $\gamma$ onto $I$, and (iii) $\pair{\tau_C}{\sigma_C}>0$. The one special contribution is denoted $m_{C_I}$ below and corresponds to the subspace $A_{C_I}=I$. By property~(b), (i) and (iii) imply that for $C \ne C_I$, the support of $m_C(k,-)$ lies outside $kH_C$ where $H_C$ is the half-space
\[ H_C=\{\xi \,|\, \pair{\tau_C}{\xi}\le \pair{\tau_C}{\gamma_C}\}.\]
Let $\p$ be the intersection of $I$ with all of the half-spaces $H_C$ for $C \ne C_I$. By~(ii), the relative interior of $\p$, viewed as a polytope in $I$, contains the point $\gamma_I$, hence in particular is non-empty. By construction $m\upharpoonright C_\p=m_{C_I}\upharpoonright C_\p$. Then property~(a) implies that $m_{C_I}$ is quasi-polynomial on $C_\p$, hence the result.

We claim that one can ensure (i) holds for all $C$ by choosing $\gamma \in \t^\ast_+$ sufficiently close to $0$. Indeed let $A_C^0$ be the subspace parallel to $A_C$, and let $a_C \in A_C$ be the nearest point in $A_C$ to $0$. Then $\gamma_C-a_C \in A_C^0$ while $\tau_C$, $a_C$ are both orthogonal to $A_C^0$, hence $\pair{\tau_C}{\gamma_C-a_C}=0=\pair{a_C}{\gamma_C-a_C}$. These imply $\pair{\tau_C}{\gamma_C}=\|a_C\|^2-\pair{a_C}{\gamma}$. If $0 \in A_C$ then $a_C=0$ and this vanishes. Otherwise we can ensure $\pair{\tau_C}{\gamma_C}>0$ by choosing $\|\gamma\|<\|a_C\|$. Since only finitely many $C$ occur, we can choose $\gamma$ such that this holds for all $C$ with $0 \notin A_C$. We now turn to verifying (ii), (iii), and also handle the case $0 \notin \Delta$ along the way.

Suppose $\gamma_C \in \mu_\g(C)$, so that $m_C$ indeed appears in \eqref{eqn:SzVer}. If $\alpha \in \R_+$ and $\pair{\tau_C}{\alpha}>0$, then since $\gamma \in \t^\ast_+$ it follows that $\pair{\gamma_C}{\alpha}>0$. It is a consequence of the cross-section theorem (cf.~\cite{ConvexitySympCuts}) that $\alpha|_{\t_C}$ appears in the list of weights $\tn{wt}(\nu_C)$. Hence
\begin{gather}\label{eqn:d2}
\sigma_C=\sum_{\substack{\delta \in \tn{wt}(\nu_C)-\R_+^{\tau_C}\\ \pair{\tau_C}{\delta}>0}} \delta,
\end{gather}
where $\R_+^{\tau_C}$ denotes the set of positive roots $\alpha$ such that $\pair{\tau_C}{\alpha}>0$, and $\tn{wt}(\nu_C)-\R_+^{\tau_C}$ denotes the list of weights on $\nu_C$ with one copy of $\alpha|_{\t_C}$ removed for each $\alpha \in \R_+$ satisfying $\pair{\tau_C}{\alpha}>0$. Hence{\samepage
\begin{gather}\label{eqn:tausignonneg}
\pair{\tau_C}{\sigma_C}\ge 0
\end{gather}
and the inequality is strict if at least one weight $\delta$ contributes in \eqref{eqn:d2}.}

If $0 \notin \Delta$ then, choosing $\gamma$ sufficiently close to $0$, we can ensure that for each $C$ such that $0 \in A_C$ we have $\gamma_C \notin \mu_\g(M)$ (a fortiori $\gamma_C \notin \mu_\g(C)$), hence $m_C$ does not appear in \eqref{eqn:SzVer} at all. On the other hand, by~(i), \eqref{eqn:tausignonneg} and property~(b), if $0 \notin A_C$ then $m_C(k,0)=0$ for all $k \ge 1$. We conclude that if $0 \notin \Delta$ then $m(k,0)=0$ for all $k \ge 1$.

We turn to the case $0 \in \Delta \subset I$. In this case we may choose $\gamma$ such that it is simultaneously close to $0$ and arbitrarily close to $\gamma_I$, the orthogonal projection of $\gamma$ onto $I$. Since $\tau_C=\gamma_C-\gamma$, $\pair{\tau_C}{\gamma}\le \pair{\tau_C}{\gamma_C}$ with equality if and only if $\gamma_C=\gamma$. By taking $\gamma$ sufficiently close to $I$, one can ensure that $\pair{\tau_C}{\gamma_I}\le \pair{\tau_C}{\gamma_C}$ with equality if and only if $\gamma_C=\gamma_I$.

We first consider contributions from components $C \in \tn{Comp}_T(M)$ such that $\gamma_C \notin \t_+^\ast$. In this case there exists a negative root $\alpha \in \R_-$ such that $\pair{\gamma_C}{\alpha}>0$. It follows from the cross-section theorem that $\alpha|_{\t_C} \in \tn{wt}(\nu_C)$. Since $\gamma \in \t_+^\ast$, $\pair{\gamma}{\alpha}\le 0$ and so
\[ \pair{\tau_C}{\alpha}=\pair{\gamma_C}{\alpha}-\pair{\gamma}{\alpha}>0. \]
As $\alpha \notin \R_+$, we see that $\delta=\alpha$ indeed contributes in \eqref{eqn:d2}, hence $\pair{\tau_C}{\sigma_C}>0$. Moreover since $\gamma_C \notin \t_+^\ast$, $\gamma_C \ne \gamma_I$, hence $\pair{\tau_C}{\gamma_I}<\pair{\tau_C}{\gamma_C}$. This establishes (ii), (iii) for this case.

We are left to consider contributions from $C \in \tn{Comp}_T(M)$ such that $\gamma_C \in \Delta=\mu_\g(M)\cap \t_+^\ast$. Let $\Delta_{\tn{reg}} \subset \Delta$ be the relatively open dense subset of weakly regular values. The connected components of $\Delta_{\tn{reg}}$ are relatively open polytopes inside the subspace $I$. Choose a connected component $\a \subset \Delta_{\tn{reg}}$ containing $0$ in its closure. We may choose $\gamma \in \t^\ast_+$ such that the orthogonal projection $\gamma_I$ onto $I$ lies in $\a$. The fibre $\mu_\g^{-1}(\gamma_I)$ is connected and contained in $M^{T_I}$, hence there is a unique connected component $C_I \subset M^{T_I}$ containing $\mu_\g^{-1}(\gamma_I)$. Then $A_{C_I}=I$ and by property~(a), $m_{C_I}$ is quasi-polynomial on the set of integral points in $\bm{A}_C=\{(t,t\tau)\,|\,t>0,\,\tau \in I\} \supset C_\p$.

The final situation to consider consists of the contributions from $C \in \tn{Comp}_T(M)$ such that $\gamma_C \in \Delta \setminus \Delta_{\tn{reg}}$. In particular $\gamma_C \ne \gamma_I$ hence
\begin{gather}\label{eqn:iistrict}
\pair{\tau_C}{\gamma_I}<\pair{\tau_C}{\gamma_C}
\end{gather}
establishing (ii) for this case. Let $\sigma$ be the face of $\t^\ast_+$ containing $\gamma_C$. The subset
\[ U=G_\sigma\cdot \bigcup_{\ol{\tau}\supset \sigma} \tau, \]
where the union is taken over relatively open faces of $\t^\ast_+$ whose closure contains~$\sigma$, is a slice for the coadjoint $G_\sigma$-action. Let $Y=\mu_\g^{-1}(U)$ be the corresponding symplectic cross-section, cf.\ \cite[Remark~3.7, Theorem~3.8]{ConvexitySympCuts}. Consider the function $f=\pair{\tau_C}{\mu}|_Y \colon Y \rightarrow \bR$, for which $C \cap Y \subset Y^{\tau_C}=\tn{Crit}(f)$ is a critical submanifold. Note that $f|_{C\cap Y}=\pair{\tau_C}{\gamma_C}$. A result from symplectic geometry says that in a suitable tubular neighborhood of $C \cap Y$, the function $f$ takes the form
\begin{gather}\label{eqn:fcnf}
f(z_1,\dots,z_n)=\pair{\tau_C}{\gamma_C}-\pi \sum_j |z_j|^2\pair{\tau_C}{\delta_j},
\end{gather}
where $\delta_j \in \tn{wt}(\nu_{C\cap Y,Y})$, $\pair{\tau_C}{\delta_j}\ne 0$, $z_j$ is a vector in the subbundle of $\nu_{C\cap Y,Y}$ where $\t_C$ acts with weight $\delta_j$, and $|z_j|$ denotes its norm with respect to a suitable Hermitian structure.

Let $S$ be the line segment with endpoints $\gamma_I$ and $\gamma_C$. By convexity $S \subset \Delta$. The inverse image $\mu_\g^{-1}(S) \subset Y$ is connected since $\mu_\g$ has connected fibres. By~\eqref{eqn:iistrict}, along the line segment~$S$, $f$~varies between its absolute minimum $\pair{\tau_C}{\gamma_I}$ on the fibre $\mu_\g^{-1}(\gamma_I)$ and its absolute maximum $\pair{\tau_C}{\gamma_C}$ on the fibre $\mu_\g^{-1}(\gamma_C)$. By connectedness of $\mu_\g^{-1}(S)$ and equation~\eqref{eqn:fcnf}, there must exist a~$\delta_j$ such that $\pair{\tau_C}{\delta_j}>0$.

By the cross-section theorem $\nu_{Y,M}|_{C\cap Y}\simeq (C\cap Y)\times \g_{\gamma_C}^\perp$, where the orthogonal comple\-ment~$\g_{\gamma_C}^\perp$ is embedded in $TM|_{C\cap Y}$ as the orbit directions. The weights $\R_+^{\tau_C}$ which are removed in~\eqref{eqn:d2} can be identified with the weights of the $\t_C$-action on $\nu_{Y,M}|_{C\cap Y}$. With this understanding we have $\tn{wt}(\nu_{C\cap Y,Y})\subset \tn{wt}(\nu_C)-\R_+^{\tau_C}$. Thus $\delta_j$ indeed contributes to~\eqref{eqn:d2}, establishing~(iii) for this case. This completes the proof.
\end{proof}

\begin{Corollary}\label{cor:asymptoticdetermination}Suppose $0 \in \Delta$ and let $\p \subset \Delta$ be as in Theorem~{\rm \ref{thm:qp}}. If $\xi \in \p$ is rational and $n_\xi \in \bZ_{>0}$ is the least positive integer such that $n_\xi \xi \in \Lambda$, then the function
\[ f_\xi \colon \ n_\xi\cdot \bZ_{>0}\rightarrow \bZ, \qquad f_\xi(k)=m(k,k\xi) \]
is quasi-polynomial. Moreover $m\upharpoonright C_\p$ is the unique quasi-polynomial function such that $m(k,k\xi)\allowbreak =f_\xi(k)$ for all rational, weakly regular values $\xi$ in the relative interior of $\p$.
\end{Corollary}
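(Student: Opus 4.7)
The plan is to deduce both assertions from Theorem~\ref{thm:qp} by slicing the quasi-polynomial extension along rays through the origin, with a Zariski-density argument handling the uniqueness part.

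For the quasi-polynomiality of $f_\xi$, Theorem~\ref{thm:qp} supplies a quasi-polynomial $Q$ on $\bZ \times \Lambda$ restricting to $m$ on $C_\p \cap (\bZ \times \Lambda)$; fix a finite-index sublattice $\Gamma' \subset \bZ \times \Lambda$ on whose cosets $Q$ becomes a polynomial. For rational $\xi \in \p$ and $k \in n_\xi \bZ_{>0}$, the point $(k, k\xi)$ lies in $C_\p \cap (\bZ \times \Lambda)$, and as $k$ varies it traces an arithmetic progression along the ray $\bR_{>0}(1, \xi)$. Its intersection with each coset of $\Gamma'$ is again an arithmetic progression (possibly empty), on which $f_\xi(k) = Q(k, k\xi)$ restricts to a polynomial in $k$. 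This exhibits $f_\xi$ as quasi-polynomial on $n_\xi \bZ_{>0}$.

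For uniqueness, suppose $Q'$ is another quasi-polynomial on $\bZ \times \Lambda$ with $Q'(k, k\xi) = f_\xi(k)$ for every rational weakly regular $\xi$ in the relative interior of $\p$ and every $k \in n_\xi \bZ_{>0}$, and set $R = Q - Q'$. After enlarging $\Gamma'$, write $R$ on each coset $\alpha + \Gamma'$ as a polynomial $P_\alpha$, extended to $\bR \times \t^\ast$. Because $0 \in \Delta \subset I$, the affine span $I$ is a linear subspace of $\t^\ast$; because $\Delta$ is a rational polytope, $\Lambda_I := \Lambda \cap I$ has full rank in $I$. Since $C_\p \subset \bR \times I$, the task reduces to showing $P_\alpha \upharpoonright \bR \times I = 0$ for each coset $\alpha$ whose intersection with $C_\p \cap (\bZ \times \Lambda)$ is non-empty; summing over $\alpha$ then yields $R = 0$ on $C_\p \cap (\bZ \times \Lambda)$.

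For a fixed such $\alpha$, the intersection $(\alpha + \Gamma') \cap (\bZ_{>0} \times \Lambda_I)$ is a coset of a finite-index sublattice of $\bZ \times \Lambda_I$, and the ratios $\lambda/k$ it produces are Euclidean-dense in $I$; intersecting with the open dense set of weakly regular values in the relative interior of $\p$ yields a subset of $\p$ that is still Zariski-dense. For each $\xi$ in this subset, the ray $\bR_{>0}(1, \xi)$ meets $\alpha + \Gamma'$ in an infinite arithmetic progression on which $P_\alpha$ vanishes, so $P_\alpha$ vanishes identically on that ray. Since these rays sweep out a Zariski-dense subset of $\bR \times I$, one concludes $P_\alpha \upharpoonright \bR \times I = 0$. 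The main technical point is the density claim in the previous sentence, which I expect to be a routine lattice-point estimate once one uses the full rank of $\Lambda_I$ in $I$ together with the rationality of $\Delta$; everything else is essentially bookkeeping on top of Theorem~\ref{thm:qp}.
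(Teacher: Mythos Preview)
The paper gives no proof of this corollary; it is stated immediately after Theorem~\ref{thm:qp} and treated as an evident consequence. Your argument is correct and supplies exactly the details the paper omits. The density claim you flag as the ``main technical point'' does go through, so your hesitation is unnecessary: for a coset $\alpha+\Gamma'$ meeting $C_\p\cap(\bZ\times\Lambda)$, the intersection with $\bZ\times\Lambda_I$ contains a translate of $N(\bZ\times\Lambda_I)$ for some $N>0$, and sending $(k,\lambda)\mapsto\lambda/k$ on this set produces a Euclidean-dense subset of $I$ (approximate $\xi$ by $(\lambda_0+N\mu_j)/(k_0+Nj)$ with $\mu_j\in\Lambda_I$ close to $j\xi$); intersecting with the open dense set of weakly regular values in the relative interior of $\p$ preserves density. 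For each such $\xi$ the ray $\{(k,k\xi)\}$ meets the coset in an infinite arithmetic progression (it meets it once by construction, and $n_\xi(1,\xi)$ has a multiple in $\Gamma'$), so $P_\alpha$ vanishes on the whole line $\bR(1,\xi)$. Finally, a polynomial $P_\alpha(t,y)$ on $\bR\times I$ that vanishes on $\bR(1,\xi)$ for a Zariski-dense set of $\xi$ is identically zero: writing $P_\alpha(t,t\xi)=\sum_d t^d q_d(\xi)$ with $q_d(\xi)=\sum_{i+|\beta|=d}c_{i,\beta}\xi^\beta$, each $q_d$ vanishes on a Zariski-dense set hence identically, and since $(i,\beta)\mapsto(i+|\beta|,\beta)$ is injective this forces all $c_{i,\beta}=0$. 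The rationality of $I$ (needed for $\Lambda_I$ to have full rank) is implicit in the paper's construction of the subtorus $T_I=\exp(\t_I)$.
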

\begin{Remark}A suitable finite collection of the functions $f_\xi$ already fully determines $m \upharpoonright C_\p$.
\end{Remark}

\section{Stationary phase calculation}\label{sec:statphase}
Assume $0 \in \Delta$ and let $\p \subset \Delta$ be as in Theorem \ref{thm:qp}, so that $m \upharpoonright C_\p$ is quasi-polynomial. By Corollary \ref{cor:asymptoticdetermination}, $m \upharpoonright C_\p$ is completely determined by the collection of quasi-polynomial functions $f_\xi(k)=m(k,k\xi)$, for $\xi$ ranging over rational, weakly regular values of $\mu_\g$ lying in the relative interior of $\p$. In this section we use the Berline--Vergne index formula and the stationary phase expansion to compute the functions $f_\xi$, and hence also $m \upharpoonright C_\p$. The end result will be the formula \eqref{eqn:main} in Theorem \ref{thm:main}.

Let $t \in T$. By the Berline--Vergne formula, for $X \in \t$ sufficiently small one has $Q\big(k,t{\rm e}^X\big)=Q_t(k,X)$ where
\begin{gather}\label{eqn:BerVer}
Q_t(k,X):=\int_{M^t} \frac{t_L^k {\rm e}^{k(\omega+2\pi \i\pair{\mu}{X})}\Td\big(M^t,\tfrac{2\pi}{\i}X\big)}{\calDC^t\big(\nu_{M^t,M},\tfrac{2\pi}{\i}X\big)}\de^{\g/\t}\big(1-t^{-1}{\rm e}^{-X}\big),
\end{gather}
and $\Td\big(M^t,\tfrac{2\pi}{\i}X\big)$, $\calDC^t\big(\nu_{M^t,M},\tfrac{2\pi}{\i}X\big)$ denote equivariant extensions of the usual Chern-Weil forms, closed with respect to the differential ${\rm d}+2\pi \i \iota(X_M)$, obtained by replacing curvatures with equivariant curvatures (evaluated at $\tfrac{2\pi}{\i}X$) in the usual formulas (cf.~\cite{BerlineGetzlerVergne} for details, although note that we are using the topologist's convention for characteristic classes).

Let $B_r$ denote the ball of radius $r>0$ around the origin in $\g/\t$. Let $\mu_{\g/\t}$ denote the composition of $\mu_\g$ with the quotient map $\g \rightarrow \g/\t$. Let $\g^t \subset \g$ denote the fixed-point set of $\Ad_t$. Then~$B_r^t$ is a neighborhood of $0$ in $\g^t/\t$. Recall $\g^t/\t$, $\g/\g^t$ are equipped with complex structures such that their $+\i$-eigenspaces are identified with sums of positive root spaces. Equip $\g^t/\t$ with the orientation induced by the complex structure, and let $\tau_{\g^t/\t}(X)$ be a $T$-equivariant Thom form with support contained in $B_r^t$, closed for the differential ${\rm d}-\iota(X_M)$. Consider the $T$-equivariant differential form on $\g^t/\t$ (closed for the differential ${\rm d}+2\pi \i \iota(X_{\g^t/\t})$) given by
\[ \Ch^t\big({\sf b},\tfrac{2\pi}{\i}X\big)=\de^{\g/\g^t}\big(1-t^{-1}{\rm e}^{-X}\big)\de^{\g^t/\t}\left(\frac{1-{\rm e}^{-X}}{X}\right)\tau_{\g^t/\t}\big(\tfrac{2\pi}{\i}X\big),\]
The map $\mu_{\g/\t}$ restricts to a map $M^t \rightarrow \g^t/\t$, which we use to pull back the form $\Ch^t\big({\sf b},\tfrac{2\pi}{\i}X\big)$.
\begin{Lemma}
\begin{gather}\label{eqn:Qg}
Q_t(k,X)=\int_{\mu_{\g/\t}^{-1}(B_r^t)} \frac{t_L^k {\rm e}^{k(\omega+2\pi \i\pair{\mu}{X})}\Td\big(M^t,\tfrac{2\pi}{\i}X\big)}{\calDC^t\big(\nu_{M^t,M},\tfrac{2\pi}{\i}X\big)}\Ch^t\big({\sf b},\tfrac{2\pi}{\i}X\big).
\end{gather}
\end{Lemma}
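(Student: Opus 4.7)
The plan is to interpret the right-hand side of \eqref{eqn:Qg} as an integral over $M^t$---the pulled-back factor $\mu_{\g/\t}^*\Ch^t({\sf b},\tfrac{2\pi}{\i}X)$ has support in $M^t\cap\mu_{\g/\t}^{-1}(B_r^t)$, which accounts for the description of the integration domain---and to show that its integrand differs from the Berline--Vergne integrand of \eqref{eqn:BerVer} by a $T$-equivariantly exact form on $M^t$. Since $M^t$ is compact (being the fixed set of $t\in T$ acting on the compact manifold $M$), Stokes' theorem then yields the equality of the two integrals.

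First I would record the algebraic factorization of the group-theoretic factor appearing in \eqref{eqn:BerVer}:
\[
\de^{\g/\t}\big(1-t^{-1}e^{-X}\big) = \de^{\g/\g^t}\big(1-t^{-1}e^{-X}\big)\,\de^{\g^t/\t}(X)\,\de^{\g^t/\t}\!\left(\tfrac{1-e^{-X}}{X}\right),
\]
obtained from the splitting $\g/\t=\g^t/\t\oplus\g/\g^t$, the triviality of the $t$-action on $\g^t$, and the factorization $1-e^{-X}=X\cdot\tfrac{1-e^{-X}}{X}$ applied to $\ad X$ on $\g^t/\t$. Comparing term by term with the definition of $\Ch^t({\sf b},\tfrac{2\pi}{\i}X)$, the proof reduces to the single cohomological identity
\[
\mu_{\g/\t}^*\tau_{\g^t/\t}\!\left(\tfrac{2\pi}{\i}X\right) \;\sim\; \de^{\g^t/\t}(X) \qquad\text{in } H_T^*(M^t),
\]
where the right-hand side is regarded as a constant, position-independent form.

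To establish this identity, I would invoke the equivariant Thom isomorphism for $V=\g^t/\t$ with its linear $T$-action: in the Cartan model, $H_T^*(V)\cong S(\t^*)$ via restriction to the origin, and equivariant localization on $V$ combined with the Mathai--Quillen normalization of $\tau_V$ forces the class of $\tau_V(\tfrac{2\pi}{\i}X)$ to coincide with the equivariant Euler class of $V$ at the origin, namely $\de^{\g^t/\t}(X)=\prod_\beta 2\pi\i\pair{\beta}{X}$. Pulling back via the equivariant map $\mu_{\g/\t}|_{M^t}\colon M^t\to V$ transports the identity to $H_T^*(M^t)$. The main obstacle is not the structure of the argument but the meticulous book-keeping of conventions---the sign of the equivariant differential (the Thom form is closed for $d-\iota(X_M)$ while the exponential of $\omega+2\pi\i\pair{\mu}{X}$ is closed for $d+2\pi\i\iota(X_M)$), the $2\pi\i$-factors in $\de^V(X)$, and the precise normalization of $\tau_V$---which is exactly why the substitution $X\mapsto\tfrac{2\pi}{\i}X$ is present in the definition of $\Ch^t({\sf b},\cdot)$. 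Once these conventions are reconciled, the remaining steps are a routine application of equivariant Stokes' theorem.
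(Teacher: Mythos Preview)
Your proposal is correct and follows essentially the same route as the paper: both arguments boil down to the fact that restriction to the origin $\{0\}=(\g^t/\t)^T$ is injective on $T$-equivariant cohomology of $\g^t/\t$, so $\Ch^t({\sf b},\tfrac{2\pi}{\i}X)$ and the constant function $\de^{\g/\t}(1-t^{-1}e^{-X})$ are equivariantly cohomologous there, and hence their pullbacks to the compact space $M^t$ may be interchanged under the integral. The only difference is organizational---the paper compares the full forms $\Ch^t({\sf b},\cdot)$ and $\de^{\g/\t}(1-t^{-1}e^{-X})$ directly at the origin, whereas you first factor algebraically and then isolate the Thom-form identity $\tau_{\g^t/\t}(\tfrac{2\pi}{\i}X)\sim\de^{\g^t/\t}(X)$---but the mathematical content is identical.
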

\begin{proof}The pullback of $\tau_{\g^t/\t}(X)$ to $0 \in \g^t/\t$ is the equivariant Euler class, which (since $0$ is just a point) is the function
\[ \prod_{\alpha \in \R^{\g^t}_+} -\pair{\alpha}{X}=\de^{\g^t/\t}\big(\tfrac{\i}{2\pi}X\big),\]
where $\R^{\g^t}_+\subset \R_+$ is a set of positive roots for $\g^t$. Note also that $t$ acts trivially on $\g^t/\t$, since $\g^t$ is the fixed point subspace under the adjoint action. It follows that the pullback of $\Ch^t\big({\sf b},\tfrac{2\pi}{\i}X\big)$ to $0 \in \g^t/\t$ is the function $\de^{\g/\t}\big(1-t^{-1}{\rm e}^{-X}\big)$. Since pullback to $\{0\}=(\g^t/\t)^T$ is injective on equivariant cohomology classes, $\Ch^t\big({\sf b},\tfrac{2\pi}{\i}X\big)$, $\de^{\g/\t}\big(1-t^{-1}{\rm e}^{-X}\big)$ determine the same class in $T$-equivariant cohomology of~$\g^t/\t$. As~$M$ is compact, we may make this replacement in~\eqref{eqn:BerVer} without changing the value of the integral.
\end{proof}
\begin{Remark}
The reason for the notation is that $\Ch^t\big({\sf b},\tfrac{2\pi}{\i}X\big)$ is a representative for the $t$-twisted Chern character of a Bott element ${\sf b} \in K_T^0(\g/\t)$, which generates the latter as an $R(T)=K_T^0(\pt)$-module. To be more precise, ${\sf b}$ is the generator whose pullback to $0 \in \g/\t$ is $[\wedge^{\tn{ev}} \n_-]-[\wedge^{\tn{odd}}\n_-] \in K^0_T(\pt)$, $\n_-$ being the direct sum of the negative root spaces.
\end{Remark}

Since $T$ is compact, there exists a finite set $S \subset T$ and an open cover $\{U_t\,|\,t \in S\}$ of~$T$ where~$U_t$ is a small open ball around~$t$ in $T$ such that $Q\big(k,t{\rm e}^X\big)=Q_t(k,X)$ for $t{\rm e}^X \in U_t$. Let~$\sigma_t$, $t \in S$ be bump functions on $\t$ such that $\{\hat{t}_\ast \sigma_t\,|\,t \in S\}$ is a partition of unity subordinate to the cover, where $\hat{t}$ is the map
\[ \hat{t}\colon \ \t \rightarrow T, \qquad X \mapsto t{\rm e}^X,\]
which we may assume restricts to a diffeomorphism of a small ball around $0 \in \t$ onto $U_t$. By equations \eqref{eqn:BerVer} and \eqref{eqn:Qg}
\[ Q=\sum_{t \in S} \hat{t}_\ast (\sigma_t Q_t). \]

The multiplicity function $m$ is the Fourier transform of $Q$:
\[ m(k,\lambda)=\sum_{t\in S} \int_{\t} \sigma_t(X)\big(t{\rm e}^X\big)^{-\lambda}Q_t(k,X).\]
To do the stationary phase calculation (for $k \rightarrow \infty$) following the approach outlined at the beginning of this section, we now set $\lambda=k\xi$ where $\xi \in (\Lambda \otimes \bQ)\cap \p$ is a rational, weakly regular value of $\mu_\g$ contained in the relative interior of $\p \subset \Delta$ as in Corollary~\ref{cor:asymptoticdetermination}, $k \in n_\xi\bZ_{>0}$ and $n_\xi$ is the least positive integer such that $n_\xi\xi \in \Lambda$. Thus
\begin{gather}\label{eqn:0}
m(k,k\xi)=\sum_t t^{-k\xi}\!\!\int_\t {\rm d} X \sigma_t(X)\!\! \int_{\mu_{\g/\t}^{-1}(B_r^t)} \frac{t_L^k\Td\big(M^t,\tfrac{2\pi}{\i}X\big)}{\calDC^t\big(\nu_{M^t,M},\tfrac{2\pi}{\i}X\big)}\Ch^t\big({\sf b},\tfrac{2\pi}{\i}X\big){\rm e}^{k(\omega+2\pi \i\pair{\mu-\xi}{X})}.\!\!\!\!
\end{gather}

Let $f(m,X)=\pair{\mu(m)-\xi}{X}$ viewed as a real-valued function on $\mu_{\g/\t}^{-1}\big(B_r^t\big) \times \t$. According to the principle of stationary phase, we can include a bump function supported in a small neighborhood of the critical set of $f$ in the integrand of \eqref{eqn:0}, and the error will be $o(k^{-\infty})$. The derivative
\[ {\rm d}_{(m,X_0)}f=\pair{{\rm d}_m\mu}{X_0}+\pair{\mu(m)-\xi}{{\rm d}_{X_0}X}\]
and in particular $\tn{Crit}(f)\subset \mu^{-1}(\xi)\times \t$. Let $\chi$ be the pullback by $\mu$ of a bump function in $\t^\ast$ supported in a small neighborhood of $\xi$. Thus{\samepage
\begin{gather}
m(k,k\xi)\sim \sum_t t^{-k\xi} \int_\t {\rm d}X \sigma_t(X)\nonumber\\
\hphantom{m(k,k\xi)\sim}{}\times \int_{\mu_{\g/\t}^{-1}(B_r^t)}\chi \frac{t_L^k \Td\big(M^t,\tfrac{2\pi}{\i}X\big)}{\calDC^t\big(\nu_{M^t,M},\tfrac{2\pi}{\i}X\big)}\Ch^t\big({\sf b},\tfrac{2\pi}{\i}X\big){\rm e}^{k(\omega+2\pi \i\pair{\mu-\xi}{X})},\label{eqn:1}
\end{gather}
where $\sim$ denotes equality modulo an $o(k^{-\infty})$ error.}

Let $Y=\mu_{\g}^{-1}(\sigma)$ be the cross-section for the principal face. By the cross-section theorem, a~neighborhood $N$ of $Y$ in $M$ is $G_\sigma$-equivariantly diffeomorphic to
\[ Y\times \g/\g_\sigma, \]
where $\g/\g_\sigma \simeq \g_\sigma^\perp$ is embedded in the orbit directions. Since $\mu^{-1}(\xi)\cap \mu_\g^{-1}(\t^\ast)=\mu_\g^{-1}(\xi)\subset Y$, by taking $r$ and $\supp(\chi)$ sufficiently small, we can assume that $\supp(\chi)\cap \mu_{\g/\t}^{-1}(B_r)$ is contained in a small neighborhood of $\mu_\g^{-1}(\xi)$ where the local model $Y\times \g/\g_\sigma$ is valid, and so we may replace $\mu_{\g/\t}^{-1}\big(B_r^t\big)$ with $N^t$ in equation~\eqref{eqn:1}. In the next lemma we use the Thom form to integrate over the $(\g/\g_\sigma)^t$ directions.
\begin{Lemma}
\begin{gather}
m(k,k\xi)\sim \sum_{t \in S}t^{-k\xi} \int_\t {\rm d}X \sigma_t(X)\nonumber\\
\hphantom{m(k,k\xi)\sim}{}\times \int_{Y^t} \chi \frac{t_L^k\Td\big(Y^t,\tfrac{2\pi}{\i}X\big)}{\calDC^t\big(\nu_{Y^t,Y},\tfrac{2\pi}{\i}X\big)}{\rm e}^{k(\omega+2\pi \i\pair{\mu-\xi}{X})}\de^{\g_\sigma/\t}\big(1-t^{-1}{\rm e}^{-X}\big).\label{eqn:2}
\end{gather}
\end{Lemma}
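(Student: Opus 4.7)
The plan is to apply the Marle--Guillemin--Sternberg local model, which identifies a $G$-invariant neighborhood $N$ of the principal cross-section $Y$ in $M$ with an open subset of $G\times_{G_\sigma} Y$; via the exponential map on $\g_\sigma^\perp\simeq \g/\g_\sigma$ one obtains $N\simeq Y\times(\g/\g_\sigma)$ near $Y$, and hence $N^t\simeq Y^t\times(\g/\g_\sigma)^t$. Because the stationary-phase bump function $\chi$ concentrates the integrand of \eqref{eqn:1} near $\mu_\g^{-1}(\xi)\subset Y$, and the Thom form $\tau_{\g^t/\t}$ appearing in $\Ch^t({\sf b},\tfrac{2\pi}{\i}X)$ is supported near $\mu_{\g/\t}^{-1}(0)$, after possibly shrinking $r$ and $\supp(\chi)$ we may assume the entire integration takes place inside $N^t$. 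The aim is then to integrate out the $(\g/\g_\sigma)^t$-fibre directions explicitly.

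I would next decompose every factor via the orthogonal splittings $\g/\t=\g_\sigma/\t\oplus \g/\g_\sigma$, $\g^t/\t=\g_\sigma^t/\t\oplus(\g/\g_\sigma)^t$, and $\g/\g^t=\g_\sigma/\g_\sigma^t\oplus(\g/\g_\sigma)/(\g/\g_\sigma)^t$. Correspondingly $\Ch^t({\sf b},\tfrac{2\pi}{\i}X)$ factors as a product $\Ch^t({\sf b}_\sigma,\tfrac{2\pi}{\i}X)\cdot \Ch^t({\sf b}_{/\sigma},\tfrac{2\pi}{\i}X)$ of the analogous Bott-type expressions for $\g_\sigma/\t$ and $\g/\g_\sigma$, with $\tau_{\g^t/\t}=\tau_{\g_\sigma^t/\t}\wedge \tau_{(\g/\g_\sigma)^t}$. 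On $N^t$ we also have $\nu_{M^t,M}=\nu_{Y^t,Y}\oplus (\g/\g_\sigma)/(\g/\g_\sigma)^t$ with the second summand a trivial equivariant bundle, and $\Td(M^t,\cdot)=\Td(Y^t,\cdot)\cdot \Td((\g/\g_\sigma)^t,\cdot)$. The equivariant Todd and $\calDC^t$ of the two trivial summands are scalar $X$-expressions which, via the identity $\tfrac{X}{1-e^{-X}}\cdot\tfrac{1-e^{-X}}{X}=1$ and direct cancellation of the factor $\de^{(\g/\g_\sigma)/(\g/\g_\sigma)^t}(1-t^{-1}e^{-X})$, cancel precisely with the two scalar factors appearing in $\Ch^t({\sf b}_{/\sigma},\tfrac{2\pi}{\i}X)$, leaving only $\tau_{(\g/\g_\sigma)^t}$ from that piece.

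It remains to carry out the $(\g/\g_\sigma)^t$-integration and compute the residual pullback to $Y^t$. Because $\mu_\g|_Y$ takes values in $\t^\ast$, the restriction of $\mu_{\g/\t}$ to $N^t$ has the form $(y,v)\mapsto(0,\phi(v))$ where $\phi$ is a local diffeomorphism of $(\g/\g_\sigma)^t$ (induced to leading order by $v\mapsto \ad^\ast_v\mu_\g(y)$, which is an isomorphism of $\g/\g_\sigma$ onto itself since $\mu_\g(y)$ lies on the weakly-regular coadjoint orbit through $\xi$); consequently $\int_{(\g/\g_\sigma)^t}\phi^\ast\tau_{(\g/\g_\sigma)^t}=1$, while $\tau_{\g_\sigma^t/\t}$ pulls back by the zero map to the scalar $\de^{\g_\sigma^t/\t}(X)$. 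Combining this with the remaining scalar factors of $\Ch^t({\sf b}_\sigma,\tfrac{2\pi}{\i}X)$ and applying the computation of the previous lemma with $G$ replaced by $G_\sigma$, one obtains precisely the desired factor $\de^{\g_\sigma/\t}(1-t^{-1}e^{-X})$; meanwhile $\omega$ and $\mu$ restrict along $Y\times\{0\}$ to $\omega|_Y$ and $\mu|_Y$. The principal obstacle is controlling the $v$-dependent corrections to $\omega$ and $\mu$ arising from the minimal-coupling term in the local model: because they vanish on $Y\times\{0\}$ and stationary phase has already restricted the integration to a small neighborhood of $\mu_\g^{-1}(\xi)\subset Y\times\{0\}$, such corrections contribute only to the $o(k^{-\infty})$ error, leaving the clean formula \eqref{eqn:2}.
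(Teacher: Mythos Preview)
Your strategy matches the paper's: use the local model $N^t\simeq Y^t\times(\g/\g_\sigma)^t$, split the characteristic forms and the Thom form along this product, and integrate over the fibre. The Lie-theoretic cancellations you describe are the same as the paper's and correctly produce the residual factor $\de^{\g_\sigma/\t}\big(1-t^{-1}{\rm e}^{-X}\big)$.

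There is, however, a genuine gap in the justification. Your decompositions $\Td(M^t,\cdot)=\Td(Y^t,\cdot)\cdot\Td((\g/\g_\sigma)^t,\cdot)$ and the corresponding one for $\calDC^t$ hold only \emph{up to equivariantly exact forms}, because the almost complex structure on $N^t$ is merely \emph{homotopic} to the product structure, not equal to it. Likewise, the claim that $\mu_{\g/\t}|_{N^t}$ has the exact shape $(y,v)\mapsto(0,\phi(v))$ is only the leading-order truth: higher terms of $\Ad^\ast_{\exp v}\mu_\g(y)$ can have a nonzero $\g_\sigma^t/\t$-component, so the pullback of $\tau_{\g_\sigma^t/\t}$ is not literally the scalar Euler class, only cohomologous to it. Your final sentence tries to sweep all such discrepancies into the stationary-phase error by saying they ``vanish on $Y\times\{0\}$'', but that is not enough: a term vanishing on the critical set can still contribute to subleading orders of the expansion through its derivatives, and the support of $\chi$ has size independent of $k$. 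The paper's remedy, which you should adopt, is to make all of these replacements only at the level of equivariant cohomology; the resulting error terms are exact, and by Stokes' theorem they become integrals containing $d\chi$. Since $d\chi$ is supported away from $\mu^{-1}(\xi)$, \emph{those} integrals are genuinely $o(k^{-\infty})$ by stationary phase. The fibre integration is then the honest push-forward identity $p_\ast(\tau_p(X)\,\alpha(X))=\iota_{Y^t}^\ast\alpha(X)$ for equivariantly closed $\alpha$ (with $\chi$ handled via $\chi=1-(1-\chi)$), leaving no residual $v$-corrections to argue away.
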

\begin{proof}The neighborhood $N^t$ of $Y^t$ in $M^t$ is $T$-equivariantly diffeomorphic to
\[ Y^t \times (\g/\g_\sigma)^t=Y^t\times \g^t/\g_\sigma^t,\]
where $\g_\sigma^t=(\g_\sigma)^t$ is the subspace of $\g_\sigma$ fixed by $t$. Moreover the almost complex structure on $N^t$ is homotopic to a product almost complex structure, where $Y^t$ is equipped with an almost complex structure compatible with the symplectic form in the cross-section, and $\g^t/\g_\sigma^t$ is equipped with the almost complex structure whose $+\i$-eigenspace is identified with a sum of positive root spaces. Let
\[ p\colon \ N^t \rightarrow Y^t \]
denote the projection. For the normal bundle
\[ \nu_{M^t,M}|_{N^t} \simeq p^\ast \nu_{Y^t,Y}\oplus (\g/\g_\sigma)/(\g/\g_\sigma)^t=p^\ast \nu_{Y^t,Y}\oplus\g/\big(\g^t+\g_\sigma\big),\]
and again the almost complex structure is homotopic to a product one, using a compatible almost complex structure on the symplectic vector bundle $\nu_{Y^t,Y}$, and an almost complex structure on~$\g/\big(\g^t+\g_\sigma\big)$ whose $+\i$-eigenspace is identified with a sum of positive root spaces. Using the identifications above we obtain, up to equivariantly exact forms:
\begin{gather}\label{eqn:dec}
\Td\big(M^t,\tfrac{2\pi}{\i}X\big)|_{N^t} =\Td\big(Y^t,\tfrac{2\pi}{\i}X\big)\tn{det}_{\bC}^{\g^t/\g_\sigma^t}\left(\frac{X}{1-{\rm e}^{-X}}\right),\\ \nonumber
\calDC^t\big(\nu_{M^t,M},\tfrac{2\pi}{\i}X\big)|_{N^t} =\calDC^t\big(\nu_{Y,Y^t},\tfrac{2\pi}{\i}X\big)\tn{det}_{\bC}^{\g/(\g^t+\g_\sigma)}\big(1-t^{-1}{\rm e}^{-X}\big).
\end{gather}

Since $Y^t \subset \mu_\g^{-1}(\t^\ast)$, the pullback of the equivariant Thom form $\tau_{\g^t/\t}(X)$ to $Y^t$ is just the function
\[ \de^{\g^t/\t}\big(\tfrac{\i}{2\pi}X\big)=\de^{\g^t/\g_\sigma^t}\big(\tfrac{\i}{2\pi}X\big)\de^{\g_\sigma^t/\t}\big(\tfrac{\i}{2\pi}X\big).\]
We recognize $\de^{\g^t/\g_\sigma^t}\big(\tfrac{\i}{2\pi}X\big)$ as the equivariant Euler class of the trivial bundle $Y^t \times \g^t/\g_\sigma^t$. Thus up to an equivariantly exact form, we have{\samepage
\begin{gather}\label{eqn:dec2}
\tau_{\g^t/\t}(X)=\tau_p(X)\de^{\g_\sigma^t/\t}\big(\tfrac{\i}{2\pi}X\big),
\end{gather}
where $\tau_p(X)$ is an equivariant Thom form for the vector bundle $p\colon N^t=Y^t\times \g^t/\g_\sigma^t\rightarrow Y^t$.}

We next want to make the replacements \eqref{eqn:dec}, \eqref{eqn:dec2} in equation \eqref{eqn:1}, and then use the Thom form $\tau_p(X)$ to integrate over the fibres of $p\colon N^t \rightarrow Y^t$. In the integral over $N^t$ in \eqref{eqn:1}, the integrand has compact support and all terms in the integrand are equivariantly closed except for the bump function $\chi$. By Stokes' theorem, replacing a form by a cohomologous form in the integrand leads to an error term containing ${\rm d}\chi$; but ${\rm d}\chi$ vanishes near $\mu^{-1}(\xi)$, so the principle of stationary phase implies the error will be $o(k^{-\infty})$. Let $\iota_{Y^t}\colon Y^t\hookrightarrow N^t$ denote the inclusion. Similarly the formula $p_\ast(\tau_p(X)\alpha(X))=\iota_{Y^t}^\ast \alpha(X)$ applies when $\alpha(X)$ is equivariantly closed. But writing $\chi=1-(1-\chi)$, the principle of stationary phase again shows that we can make this replacement up to an $o(k^{-\infty})$ error term.

After making these replacements and integrating over the fibre, the form $\tau_p\big(\tfrac{2\pi}{\i}X\big)$ disappears. There are various Lie theoretic factors left over:
\[ \frac{\de^{\g/\g^t}\big(1-t^{-1}{\rm e}^{-X}\big)}{\de^{\g/(\g^t+\g_\sigma)}\big(1-t^{-1}{\rm e}^{-X}\big)}\de^{\g^t/\t}\left(\frac{1-{\rm e}^{-X}}{X}\right)\de^{\g_\sigma^t/\t}(X)\de^{\g^t/\g_\sigma^t}\left(\frac{X}{1-{\rm e}^{-X}}\right),\]
which simplify to $\de^{\g_\sigma/\t}\big(1-t^{-1}{\rm e}^{-X}\big)$ (one uses that $t$ acts trivially on $\g_\sigma^t/\t$ and that $\big(\g^t+\g_\sigma\big)/\g^t\simeq \g_\sigma/\g_\sigma^t$).
\end{proof}

Choose a complementary subtorus $T_I^\prime$ so that $T\simeq T_I \times T_I^\prime$. The quotient map $T \rightarrow T/T_I$ induces an isomorphism of groups $T_I^\prime \xrightarrow{\sim}T/T_I$.
By adding additional points if necessary, we may assume the finite subset $S\subset T$ is a product $S_I\times S_I^\prime$, where $S_I \subset T_I$, $S_I^\prime \subset T_I^\prime$ and that the image of $S_I^\prime$ in $T/T_I$ contains the set $S_P$ from the introduction. Thus we will write elements of~$S$ as products $hg$ with $h \in S_I \subset T_I$ and $g \in S_I^\prime \subset T_I^\prime$. We may assume the bump function $\sigma_t$ is a product $\sigma_h \cdot \sigma_g$, where $\sigma_h$ (resp.~$\sigma_g$) is a bump function on $\t_I$ (resp.~$\t_I^\prime$), satisfying
\begin{gather}\label{eqn:Sum1}
 \sum_{h \in S_I} \hat{h}_\ast\sigma_h=1, \qquad \sum_{g \in S_I^\prime} \hat{g}_\ast\sigma_g=1.
\end{gather}
The next lemma gives a further simplification of \eqref{eqn:2}.
\begin{Lemma}
\begin{gather}
\label{eqn:Asympt2}
m(k,k\xi)\sim \sum_{g \in S_I^\prime}g^{-k\xi} \int_{\t_I^\prime}{\rm d} X\sigma_g(X) \int_{Y^g} \chi \frac{g_L^k\Td\big(Y^g,\tfrac{2\pi}{\i}X\big)}{\calDC^g\big(\nu_{Y^g,Y},\tfrac{2\pi}{\i}X\big)}{\rm e}^{k(\omega+2\pi \i\pair{\mu-\xi}{X})}.
\end{gather}
\end{Lemma}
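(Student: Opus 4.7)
The strategy is to split the integration variables according to the product decomposition $T\simeq T_I\times T_I^\prime$ and an associated linear decomposition $\t=\t_I\oplus \t_I^\prime$, chosen so that $\t_I^\prime\subset \z_\sigma$. This is possible because $\t_\sigma:=[\g_\sigma,\g_\sigma]\cap\t$ is contained in $\t_I$ (since $\t_\sigma$ annihilates $\z_\sigma^\ast\supset I$). Once the integrand is factored, the $T_I$-integration can be carried out explicitly via a Weyl-denominator computation.

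Write $t=hg$ and $X=X_I+X'$ with $h\in S_I$, $g\in S_I^\prime$, $X_I\in\t_I$, $X'\in\t_I^\prime$, so that $\sigma_t(X)=\sigma_h(X_I)\sigma_g(X')$. I claim every factor in the integrand of \eqref{eqn:2} depends either only on $(g,X')$ or only on $(h,X_I)$. Indeed, $T_I$ acts trivially on $Y$ by the defining property, hence trivially on $TY$ and on $\nu_{Y^g,Y}$, so the equivariant moments in the $\t_I$-direction vanish; by Kostant's condition together with $\langle\mu_\g|_Y,\t_I\rangle=0$ (which follows from $\mu_\g|_Y\subset I$ and $\t_I$ annihilating $I$), $T_I$ also acts trivially on $L|_Y$. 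This shows $Y^t=Y^g$, and that $t_L^k$, $\Td(Y^g,\tfrac{2\pi}{\i}X)$, $\calDC^g(\nu_{Y^g,Y},\tfrac{2\pi}{\i}X)$ depend only on $(g,X')$, and likewise $t^{-k\xi}=g^{-k\xi}$ and $\pair{\mu-\xi}{X_I}=0$ on $Y$. Meanwhile, the Weyl-type factor $\de^{\g_\sigma/\t}(1-t^{-1}{\rm e}^{-X})$ depends only on $(h,X_I)$, since the roots of $\g_\sigma$ vanish on $\z_\sigma\supset \t_I^\prime$ and so $g^\alpha=1$ and $\pair{\alpha}{X'}=0$ for every such root.

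With this factorization the sum/integral in \eqref{eqn:2} becomes a product: the $(g,X')$-piece reproduces the integrand of \eqref{eqn:Asympt2}, and the $(h,X_I)$-piece is
\[ \sum_{h\in S_I}\int_{\t_I}\sigma_h(X_I)\de^{\g_\sigma/\t}\big(1-h^{-1}{\rm e}^{-X_I}\big)\,{\rm d}X_I = \int_{T_I}\de^{\g_\sigma/\t}\big(1-h^{-1}\big)\,{\rm d}h, \]
using the partition of unity \eqref{eqn:Sum1}. Expanding $\prod_{\alpha\in\R_+^{\g_\sigma}}(1-h^{-\alpha})$ and integrating term by term over $T_I\supset T_\sigma$, only the empty product (the constant $1$) survives, since any nonempty sum of positive roots of $\g_\sigma$ is nonzero on $\t_\sigma$. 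Equivalently, by the Weyl character formula for $G_\sigma$ one has $\de^{\g_\sigma/\t}(1-h^{-1})=\sum_{w\in W_\sigma}(-1)^{l(w)}h^{w\rho_\sigma-\rho_\sigma}$, and only $w=1$ gives a nonzero integral (by regularity of $\rho_\sigma$). Hence the $T_I$-integral equals~$1$, yielding \eqref{eqn:Asympt2}.

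The main technical obstacle is aligning the subtorus $T_I^\prime$ fixed just before the statement with the requirement $\t_I^\prime\subset \z_\sigma$. The existence of such a vector-space complement is immediate from $\t_\sigma\subset \t_I$, but promoting it to a closed subtorus complementing $T_I$ in $T$ imposes a mild integrality condition that needs checking. In any case, the argument above only uses the vector-space splitting and the product partition of unity, so one can always conduct the factorization in this weaker sense.
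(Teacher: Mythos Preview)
Your argument is essentially the same as the paper's: split variables along $T_I\times T_I'$, observe that the characteristic forms on $Y^t$ are blind to the $T_I$-factor, and show that the leftover $\sum_{h}\int_{\t_I}\sigma_h\cdot\de^{\g_\sigma/\t}(1-\cdots)$ equals $1$ by expanding the product and killing every nontrivial character. The one genuine difference is that you impose $\t_I'\subset\z_\sigma$ so that the determinant factor depends \emph{only} on $(h,X_I)$; the paper does not make this choice and instead carries the harmless constants $g^{-\zeta}e^{-2\pi\i\pair{\zeta}{X'}}$ through the computation, noting they disappear once only $\zeta=0$ survives. Both routes use the same orthogonality-of-characters step.

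Your last paragraph, however, does not quite work as a fallback. The elements $g\in S_I'$ and the product partition of unity are defined in terms of the \emph{subtorus} $T_I'$, so you cannot retreat to a mere vector-space complement: you need $g^\alpha=1$ for $g\in T_I'$, which is a group-theoretic statement. Fortunately the concern is moot: a complementary subtorus $T_I'\subset Z_\sigma$ always exists. Since $T_\sigma\subset T_I$, the composite $Z_\sigma\hookrightarrow T\twoheadrightarrow T/T_I$ is a surjection of tori, and every surjection of tori admits a section (the cocharacter lattice of the target is free). The image of this section is your $T_I'$. With that one line added, your proof is complete and matches the paper's.
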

\begin{proof}As $T_I$ acts trivially on $Y$ and $\mu(Y)\subset I$, the characteristic forms in~\eqref{eqn:2} only depend on the component of $X$ (resp.~$t$) in~$\t_I^\prime$ (resp.~$T_I^\prime$). Likewise as $\xi \in (\Lambda \otimes \bQ) \cap I$, $t^{-k\xi}$ only depends on the component~$g$ of~$t$ in~$T_I^\prime$. This means the following expression can be split off from~\eqref{eqn:2} and evaluated separately:
\begin{gather}\label{eqn:SplitOff}
\sum_{h \in S_I}\int_{\t_I}{\rm d} X \sigma_h(X)\de^{\g_\sigma/\t}\big(1-h^{-1}g^{-1}{\rm e}^{-X}\big).
\end{gather}
The determinant is given by a product:
\[ \prod_{\alpha \in \R_+^{\g_\sigma}} \big(1-h^{-\alpha}g^{-\alpha}{\rm e}^{-2\pi \i \pair{\alpha}{X}}\big).\]
When the product over $\R_+^{\g_\sigma}$ is expanded, we obtain an alternating sum of terms of the form $h^{-\zeta}g^{-\zeta}{\rm e}^{-2\pi \i \pair{\zeta}{X}}$, where $\zeta$ is a sum of a subset of $\R_+^{\g_\sigma}$. The elements of $\R_+^{\g_\sigma}$ lie in $\tn{ann}(\z_\sigma)$, the annihilator of $\z_\sigma$ in $\t^\ast$. Since $\t^\ast=\z_\sigma^\ast\oplus \ann(\z_\sigma)$ and $I \subset \z_\sigma^\ast$, it follows that either $\zeta=0$ or else $\zeta \notin I$.

We claim that if $\zeta \ne 0$, then the corresponding contribution to \eqref{eqn:SplitOff} is $0$. Indeed taking the Fourier transform of the first equation in \eqref{eqn:Sum1}, we find that for any $[\zeta] \in \Lambda/(\Lambda \cap I)$, the weight lattice of $T_I$, we have
\[ \sum_{h \in S_I} h^{-[\zeta]}\int_{\t_I}\sigma_h(X){\rm e}^{-2\pi \i\pair{[\zeta]}{X}}{\rm d}X=\delta_0([\zeta]),\]
where $\delta_0$ is the function on $\Lambda/(\Lambda \cap I)$ equal to $1$ at $0$ and $0$ otherwise, obtained by Fourier transform of the constant function $1$ on $T_I$. Thus for $\zeta \in \Lambda$,
\begin{gather*}
\sum_{h \in S_I} h^{-\zeta}\int_{\t_I}\sigma_h(X){\rm e}^{-2\pi \i\pair{\zeta}{X}}{\rm d} X=\delta_{\Lambda \cap I}(\zeta),
\end{gather*}
where $\delta_{\Lambda \cap I}$ is the function on $\Lambda$ equal to $1$ on $\Lambda \cap I$ and $0$ otherwise. In particular if $\zeta \notin I$ we see that the corresponding contribution in \eqref{eqn:SplitOff} vanishes.

On the other hand, using equation \eqref{eqn:Sum1}, the contribution from $\zeta=0$ to \eqref{eqn:SplitOff} is
\[ \sum_{h \in S_I}\int_{\t_I}{\rm d} X \sigma_h(X)=1. \]
This yields the expression on the right-hand-side of \eqref{eqn:Asympt2}.
\end{proof}

\looseness=-1 We can now complete the proof of Theorem \ref{thm:main}. The fibre $P=\mu^{-1}(\xi) \subset Y$ is smooth, and the quotient $\Sigma_e:=M_\xi=P/G_\sigma=P/T_I^\prime$ is an orbifold ($T_I^\prime$ acts locally freely on~$P$). By the coisotropic embedding theorem, a neighborhood of $P$ in $Y$ is $T$-equivariantly symplectomorphic to
\[ P \times B_I \subset P \times I, \]
where $B_I$ is a small ball around $\xi$ in the subspace $I \subset \t^\ast$, the moment map $\mu$ is projection to the second factor, and the symplectic form
\[ \omega|_{P\times B_I}=\omega_\xi+{\rm d}\pair{\eta-\xi}{\theta}=\omega_\xi+\pair{{\rm d}\eta}{\theta}+\pair{\eta-\xi}{F_\theta},\]
where $\omega_\xi$ is the pullback of the symplectic form on the reduced space $M_\xi$, $\eta$ is the variable in~$B_I$, $\theta \in \Omega^1(P,\t^\ast)^T$ is a connection on $P$ with curvature $F_\theta={\rm d}\theta$, and here as well as below we have omitted pullbacks from the notation. A neighborhood of~$P^g$ in~$Y^g$ is $T$-equivariantly symplectomorphic to
\[ P^g \times B_I^g=P^g \times B_I,\]
and $T_I^\prime$ acts locally freely on $P^g$, with the quotient $\Sigma_g=P^g/T_I^\prime$ being an orbifold. On the same neighborhood we have{\samepage
\begin{gather*} \Td\big(Y^g,\tfrac{2\pi}{\i}X\big)=\pr_1^\ast \Td\big(P^g,\tfrac{2\pi}{\i}X\big),\\
\nu_{Y^g,Y}=\pr_1^\ast \nu_{P^g,P} \quad \Rightarrow \quad \calDC^g\big(\nu_{Y^g,Y},\tfrac{2\pi}{\i}X\big)=\pr_1^\ast \calDC^g\big(\nu_{P^g,P},\tfrac{2\pi}{\i}X\big).
\end{gather*}
Below we will omit $\pr_1^\ast$ from the notation.}

Take the bump function $\chi$ to have its support contained in the neighborhood of $P$ where the above local normal forms are valid. We may then integrate over $I$ instead of $B_I$, since $\chi$ vanishes outside of $P\times B_I$ by assumption. On $\supp(\chi)$,
\[ {\rm e}^{k(\omega+2\pi \i\pair{\mu-\xi}{X})}={\rm e}^{k(\omega_\xi+\pair{{\rm d}\eta}{\theta}+\pair{\eta-\xi}{F_\theta}+2\pi \i\pair{\eta-\xi}{X})}.\]
Only the top degree part of ${\rm e}^{k\pair{{\rm d}\eta}{\theta}}$ contributes to the integral over $I$; this top degree part is $(-1)^{n(n-1)/2}k^n {\rm d}\eta \cdot \Theta$, where $n=\dim(I)$, ${\rm d}\eta=\Pi  {\rm d}\eta^a$, $\Theta=\Pi \theta_a$ in terms of coordinates on~$I$. The sign $(-1)^{n(n-1)/2}$ relates the symplectic and product orientations for $P^g \times I$, so will be absorbed when we use Fubini's theorem to write the integral over $P^g\times I$ as an iterated integral. Let $\ol{\chi}(\eta)=\chi(\eta+\xi)$, a bump function on $I$ supported near $0$. Making these substitutions, as well as a change of variables $\eta \leadsto \eta+\xi$ in the integral over~$I$, the asymptotic expression~\eqref{eqn:Asympt2} for $m(k,k\xi)$ simplifies to
\begin{gather*}
k^n\sum_g g^{-k\xi} \int_{\t_I^\prime \times I} {\rm d}X\, {\rm d}\eta\,{\rm e}^{2\pi \i k\pair{\eta}{X}} \sigma_g(X)\ol{\chi}(\eta) \int_{P^g} \Theta \frac{g_L^k\Td\big(P^g,\tfrac{2\pi}{\i}X\big)}{\calDC^g\big(\nu_{P^g,P},\tfrac{2\pi}{\i}X\big)}{\rm e}^{k(\omega_\xi+\pair{\eta}{F_\theta})}.
\end{gather*}

We need the following special case of the stationary phase expansion.
\begin{Proposition}[{stationary phase expansion, cf.~\cite[Lemma 7.7.3]{Hormander1}}]
Let $u(X,\eta)$ be a Schwartz function. We have the following asymptotic expansion in $k$:
\[ \int_{\t_I^\prime \times (\t_I^\prime)^\ast}{\rm d}X\, {\rm d}\eta\, {\rm e}^{2\pi \i k\pair{\eta}{X}}u(X,\eta)\sim \frac{1}{k^n}\sum_{j=0}^\infty \frac{1}{j!} \left(\sum_a \frac{\i}{2\pi k}\frac{\partial}{\partial \eta_a}\frac{\partial}{\partial X^a}\right)^j u(0,0).\]
\end{Proposition}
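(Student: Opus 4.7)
The phase $\phi(X,\eta) = \pair{\eta}{X}$ is a non-degenerate real quadratic form on $\t_I^\prime \times (\t_I^\prime)^\ast$ with a unique critical point at the origin; its Hessian has signature zero and determinant $\pm 1$. The stated identity is thus the classical stationary phase expansion in the special case of this phase, and the plan is to give a direct self-contained derivation via Fourier transform rather than invoking the general theorem.

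The key step is to substitute the Fourier inversion formula
\[ u(X,\eta) = \int \ti{u}(\xi,y)\,{\rm e}^{2\pi\i(\pair{\xi}{X}+\pair{\eta}{y})}\,{\rm d}\xi\,{\rm d}y \]
into the left-hand side and exchange the order of integration (justified by Schwartz decay, e.g.\ after inserting a Gaussian regularizer ${\rm e}^{-\varepsilon(\|X\|^2+\|\eta\|^2)}$ and letting $\varepsilon\to 0$). The $X$-integral yields a Dirac delta $\delta(k\eta+\xi)$, and integrating against it in $\eta$ produces a factor $k^{-n}$, with $n=\dim\t_I^\prime$. This gives the exact identity
\[ \int_{\t_I^\prime\times(\t_I^\prime)^\ast} {\rm e}^{2\pi\i k\pair{\eta}{X}}u(X,\eta)\,{\rm d}X\,{\rm d}\eta \;=\; \frac{1}{k^n}\int \ti{u}(\xi,y)\,{\rm e}^{-2\pi\i\pair{\xi}{y}/k}\,{\rm d}\xi\,{\rm d}y. \]

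Next, I would Taylor expand the remaining exponential as ${\rm e}^{-2\pi\i\pair{\xi}{y}/k} = \sum_{j=0}^{J-1}\frac{(-2\pi\i)^j}{j!\,k^j}\pair{\xi}{y}^j + R_J$. Each polynomial term is rewritten using Fourier inversion at the origin: from $\widehat{\partial_{X^a}\partial_{\eta_a}u}(\xi,y) = (2\pi\i)^2 \xi_a y_a \ti{u}(\xi,y)$ combined with $\int \hat f(\xi,y)\,{\rm d}\xi\,{\rm d}y = f(0,0)$ one obtains
\[ \int \pair{\xi}{y}^j \ti{u}(\xi,y)\,{\rm d}\xi\,{\rm d}y \;=\; \frac{1}{(2\pi\i)^{2j}}\Bigl(\sum_a \frac{\partial}{\partial X^a}\frac{\partial}{\partial\eta_a}\Bigr)^j u(0,0). \]
Collecting the constants $(-2\pi\i)^j/(2\pi\i)^{2j} = (\i/(2\pi))^j$ then reproduces exactly the right-hand side of the claimed expansion.

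Finally, the Lagrange remainder bound $|R_J| \le (2\pi|\pair{\xi}{y}|)^J/(J!\,k^J)$, together with the Schwartz decay of $\ti{u}$ which makes $\int |\pair{\xi}{y}|^J|\ti{u}(\xi,y)|\,{\rm d}\xi\,{\rm d}y$ finite, shows the truncation error is $O(k^{-n-J})$, confirming the asymptotic nature of the expansion. I do not anticipate any real obstacle beyond constant-bookkeeping; the only analytically delicate moment is the Fubini step where the Dirac delta appears, which is handled by the Gaussian regularization noted above (or, equivalently, by Plancherel).
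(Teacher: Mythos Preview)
Your argument is correct and complete: the Fourier-inversion substitution, the delta-function evaluation producing the $k^{-n}$ factor, the Taylor expansion of ${\rm e}^{-2\pi\i\pair{\xi}{y}/k}$, the identification of polynomial moments with iterated derivatives at the origin, and the remainder bound all go through as you describe, and the constant bookkeeping $(-2\pi\i)^j/(2\pi\i)^{2j}=(\i/2\pi)^j$ is right.

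It is worth noting, though, that the paper does not actually prove this Proposition. It is stated with a citation to H\"ormander's Lemma~7.7.3 and followed only by a Remark explaining the dictionary: set $x=(X,\eta)\in\bR^{2n}$, take the symmetric matrix $A(X,\eta)=(\eta,X)$, and recall H\"ormander's convention $D=-\i\,{\rm d}/{\rm d}x$. So the paper's ``proof'' is a reduction to the general quadratic stationary phase lemma, whereas you give a direct Fourier-analytic derivation tailored to the specific phase $\pair{\eta}{X}$. Your route is more elementary and self-contained, avoiding the full machinery of the Morse-lemma-based stationary phase; the paper's route, by contrast, makes clear that this is just an instance of a standard theorem and requires no new analysis at all. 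Either is entirely adequate here.
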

\begin{Remark}
To obtain the expression here from the expression appearing in \emph{loc.\ cit.}, one sets $x=(X,\eta) \in \bR^{2n}$ and $A(X,\eta)=(\eta,X)$. Note also that in H{\"o}rmander's notation $D=-\i({\rm d}/{\rm d}x)$.
\end{Remark}

We apply this to the smooth compactly supported function
\[ u(X,\eta)=\sigma_g(X)\ol{\chi}(\eta) \int_{P^g} \Theta \frac{g_L^k\Td\big(P^g,\tfrac{2\pi}{\i}X\big)}{\calDC^g\big(\nu_{P^g,P},\tfrac{2\pi}{\i}X\big)}{\rm e}^{k(\omega_\xi+\pair{\eta}{F_\theta})}.\]
Although this function depends on $k$, the dependence is quasi-polynomial, and so the expansion still applies. Since $\sigma_g(X)$, $\ol{\chi}(\eta)$ equal $1$ in a neighborhood of~$0$, they have no effect on the expansion. The $\eta$ derivatives $k^{-1}\partial_{\eta_a}$ operate only on the factor ${\rm e}^{k\pair{\eta}{F_\theta}}$. The combined effect of the operator $\Sigma_a\,(\i/2\pi k)\partial_{\eta_a}\partial_{X^a}$ is to replace $X$ with $(\i/2\pi)F_\theta$, yielding the asymptotic expansion
\begin{gather}\label{eqn:4}
m(k,k\xi)\sim \sum_g g^{-k\xi} \int_{P^g} \Theta \frac{g_L^k\Td\big(P^g,F_\theta\big)}{\calDC^g(\nu_{P^g,P},F_\theta)}{\rm e}^{k\omega_\xi}.
\end{gather}
(By substituting $F_\theta$ for $X$ in $\Td\big(P^g,X\big)$, $\calDC^g(\nu_{P^g,P},X)^{-1}$, we mean to take the Taylor expansion around $X=0$ and substitute the differential form $F_\theta$.) At this stage we see that the contribution of $g \in S_I^\prime$ vanishes unless $P^g \ne \varnothing$, so that $S_I^\prime = S_P$ ($S_P$ is as in Theorem~\ref{thm:main}). As the characteristic forms $\Td\big(P^g,F_\theta\big)$, $\calDC^g(\nu_{P^g,P},F_\theta)$ appear multiplied by the form~$\Theta$, which has top degree in the $T_I^\prime$ orbit directions, we can replace these characteristic forms with their horizontal parts. Substituting $F_\theta$ for $X$ and taking the horizontal part is the definition of the Cartan map $\tn{Car}_\theta$ for the locally free action of $T_I^\prime$ on the space $P^g$, hence the result is the pullback along the map~$P^g \rightarrow \Sigma_g=P^g/T_I^\prime$ of the form
\[ \frac{\Td(\Sigma_g)}{\calDC^g(\nu_{\Sigma,\Sigma^g})}. \]
(See our remarks in the introduction regarding characteristic forms for orbifolds.) Similarly the $1^{\rm st}$ Chern form $c_1(L_\Sigma)$ is obtained by applying the Cartan map to the equivariant symplectic form $\omega_\t(X)=\omega-\pair{\mu}{X}$, and results in $c_1(L_\Sigma)=\omega_\xi-\pair{\xi}{F_\theta}$. Hence $\Ch(L_\Sigma)={\rm e}^{c_1(L_\Sigma)}={\rm e}^{\omega_\xi-\pair{\xi}{F_\theta}}$. The integral over the fibres of $P^g \rightarrow \Sigma_g$ then gives $1/d$, where $d \colon \Sigma=\sqcup \Sigma_g \rightarrow \bZ$ is the locally constant function giving the size of the generic stabilizer for the $T_I^\prime \simeq T/T_I$ action on $\sqcup P^g \rightarrow \Sigma$. Equation~\eqref{eqn:4} becomes
\begin{gather}\label{eqn:5}
m(k,k\xi)\sim \sum_{g\in S_P} g^{-k\xi} \int_{\Sigma_g} \frac{1}{d}\frac{g_L^k\Ch(L_\Sigma)^k\Td(\Sigma)}{\calDC^g(\nu_{\Sigma_g,\Sigma})}{\rm e}^{k\pair{\xi}{F_\theta}}.
\end{gather}
By Corollary \ref{cor:asymptoticdetermination}, $m(k,k\xi)$ is a quasi-polynomial function of $k$, hence the asymptotic expansion must be exact, or in other words, `$\sim$' in equation \eqref{eqn:5} can be replaced with `$=$'. Thus setting $\lambda=k\xi$ we have
\begin{gather}
\label{eqn:6}
m(k,\lambda)= \sum_{g\in S_P} g^{-\lambda} \int_{\Sigma_g} \frac{1}{d}\frac{g_L^k\Ch(L_\Sigma)^k\Td(\Sigma)}{\calDC^g(\nu_{\Sigma_g,\Sigma})}{\rm e}^{\pair{\lambda}{F_\theta}}.
\end{gather}
The right-hand-side of equation \eqref{eqn:6} is quasi-polynomial in $(k,\lambda)$. Hence by Corollary~\ref{cor:asymptoticdetermination}, equation~\eqref{eqn:6} holds on \emph{all} of $C_\p$ (and not only at points $(k,\lambda)$ with $\lambda=k\xi$, $\xi$ a rational, weakly regular value in the relative interior of~$\p$). This completes the proof of Theorem~\ref{thm:main}.

\subsection*{Acknowledgements}

I~thank M.~Vergne and E.~Meinrenken for helpful conversations. I thank the referees for helpful comments and suggestions that improved the article.

\pdfbookmark[1]{References}{ref}
\LastPageEnding


\begin{thebibliography}{99}
\footnotesize\itemsep=0pt

\bibitem{BerlineGetzlerVergne}
Berline N., Getzler E., Vergne M., Heat kernels and {D}irac operators,
 \textit{Grundlehren der Mathematischen Wissenschaften}, Vol.~298,
 \href{https://doi.org/10.1007/978-3-642-58088-8}{Springer-Verlag}, Berlin, 1992.

\bibitem{daSilvaThesis}
Canas~da Silva A.M.L.G., Multiplicity formulas for orbifolds, Ph.D.~Thesis,
 {M}assachusetts Institute of Technology, 1996.

\bibitem{GuilleminSternbergConjecture}
Guillemin V., Sternberg S., Geometric quantization and multiplicities of group
 representations, \href{https://doi.org/10.1007/BF01398934}{\textit{Invent. Math.}} \textbf{67} (1982), 515--538.

\bibitem{Hormander1}
H\"{o}rmander L., The analysis of linear partial differential
 operators.~{I}.~Distribution theory and Fourier ana\-ly\-sis, 2nd~ed., \textit{Grundlehren
 der Mathematischen Wissenschaften}, Vol.~256, \href{https://doi.org/10.1007/978-3-642-61497-2}{Springer-Verlag},
 Berlin, 1990.

\bibitem{ConvexitySympCuts}
Lerman E., Meinrenken E., Tolman S., Woodward C., Nonabelian convexity by
 symplectic cuts, \href{https://doi.org/10.1016/S0040-9383(97)00030-X}{\textit{Topology}} \textbf{37} (1998), 245--259.

\bibitem{VerlindeSums}
Loizides Y., Meinrenken E., The decomposition formula for {V}erlinde sums,
 \href{https://arxiv.org/abs/1803.06684}{arXiv:1803.06684}.

\bibitem{MeinrenkenAsymptotic}
Meinrenken E., On {R}iemann--{R}och formulas for multiplicities,
 \href{https://doi.org/10.1090/S0894-0347-96-00197-X}{\textit{J.~Amer. Math. Soc.}} \textbf{9} (1996), 373--389,
 \href{https://arxiv.org/abs/alg-geom/9405014}{arXiv:alg-geom/9405014}.

\bibitem{MeinrenkenSymplecticSurgery}
Meinrenken E., Symplectic surgery and the {${\rm Spin}^c$}-{D}irac operator,
 \href{https://doi.org/10.1006/aima.1997.1701}{\textit{Adv. Math.}} \textbf{134} (1998), 240--277, \href{https://arxiv.org/abs/dg-ga/9504002}{arXiv:dg-ga/9504002}.

\bibitem{MeinrenkenEncyclopedia}
Meinrenken E., Equivariant cohomology and the {C}artan model, in Encyclopedia
 of Mathematical Physics, \href{https://doi.org/10.1016/b0-12-512666-2/00344-8}{Elsevier}, 2006, 242--250.

\bibitem{MeinrenkenSjamaar}
Meinrenken E., Sjamaar R., Singular reduction and quantization,
 \href{https://doi.org/10.1016/S0040-9383(98)00012-3}{\textit{Topology}} \textbf{38} (1999), 699--762, \href{https://arxiv.org/abs/dg-ga/9707023}{arXiv:dg-ga/9707023}.

\bibitem{ParadanRiemannRoch}
Paradan P.-E., Localization of the {R}iemann--{R}och character,
 \href{https://doi.org/10.1006/jfan.2001.3825}{\textit{J.~Funct. Anal.}} \textbf{187} (2001), 442--509,
 \href{https://arxiv.org/abs/math.DG/9911024}{arXiv:math.DG/9911024}.

\bibitem{ParadanWallCrossing}
Paradan P.-E., Wall-crossing formulas in {H}amiltonian geometry, in Geometric
 Aspects of Analysis and Mechanics, \textit{Progr. Math.}, Vol.~292,
 \href{https://doi.org/10.1007/978-0-8176-8244-6_11}{Birkh\"{a}user/Springer}, New York, 2011, 295--343, \href{https://arxiv.org/abs/math.SG/0411306}{arXiv:math.SG/0411306}.

\bibitem{WittenNonAbelian}
Paradan P.-E., Vergne M., Witten non abelian localization for equivariant
 {K}-theory, and the {$[Q,R]=0$} theorem, \href{https://doi.org/10.1090/memo/1257}{\textit{Mem. Amer. Math. Soc.}}
 \textbf{261} (2019), 71~pages, \href{https://arxiv.org/abs/1504.07502}{arXiv:1504.07502}.

\bibitem{SzenesVergne2010}
Szenes A., Vergne M., {$[Q,R]=0$} and {K}ostant partition functions,
 \href{https://doi.org/10.4171/LEM/63-3/4-8}{\textit{Enseign. Math.}} \textbf{63} (2017), 471--516, \href{https://arxiv.org/abs/1006.4149}{arXiv:1006.4149}.

\bibitem{TianZhang}
Tian Y., Zhang W., An analytic proof of the geometric quantization conjecture
 of {G}uillemin--{S}ternberg, \href{https://doi.org/10.1007/s002220050223}{\textit{Invent. Math.}} \textbf{132} (1998),
 229--259.

\end{thebibliography}
\end{document}